\documentclass[10pt,letterpaper]{article}
\usepackage[margin=1in]{geometry}
\usepackage{amsmath}
\usepackage{amsthm}
\usepackage{amsfonts}
\usepackage{cleveref}
\usepackage{enumitem}
\usepackage[sort&compress,numbers]{natbib}

\usepackage{tikz}
\usetikzlibrary{calc,fadings,decorations.pathreplacing,arrows,
	datavisualization.formats.functions,shapes.geometric}

\usepackage{tikz-network}






\def\boxit#1{\vbox{\hrule\hbox{\vrule\kern6pt
			\vbox{\kern6pt#1\kern6pt}\kern6pt\vrule}\hrule}}

\def\bse{\begin{eqnarray*}}
	\def\ese{\end{eqnarray*}}
\def\be{\begin{eqnarray}}
	\def\ee{\end{eqnarray}}
\def\bq{\begin{equation}}
	\def\eq{\end{equation}}
\def\bse{\begin{eqnarray*}}
	\def\ese{\end{eqnarray*}}


\newcommand{\bbR}{\mathbb{R}}

\newcommand{\bbN}{\mathbb{N}}

\newcommand{\bbC}{\mathbb{C}}

\newcommand{\bbZ}{\mathbb{Z}}

\newcommand{\bx}{\mathbf{x}}
\newcommand{\by}{\mathbf{y}}
\newcommand{\bv}{\mathbf{v}}
\newcommand{\bz}{\mathbf{z}}

\newcommand{\bbe}{\mathbf{e}}

\newcommand{\br}{\boldsymbol{r}}


\newcommand{\mcB}{{\mathcal B}}
\newcommand{\mcC}{{\mathcal C}}
\newcommand{\mcD}{\mathcal D}
\newcommand{\mcE}{{\mathcal E}}
\newcommand{\mcF}{\mathcal{F}}
\newcommand{\mcH}{\mathcal{H}}
\newcommand{\mcI}{{\mathcal I}}

\newcommand{\mcL}{{\mathcal L}}

\newcommand{\mcP}{{\mathcal P}}

\newcommand{\mcS}{\mathcal{S}}

\newcommand{\ii}{\mathbf{i}}

\newcommand{\pp}{\mathbf{p}}

\newcommand{\mm}{\mathbf{m}}

\newcommand{\oone}{\boldsymbol{1}}

\newcommand{\Nset}{\mathbb{N}_0}

\newcommand{\Pol}{\mathbb{P}}
\newcommand{\eset}[1]{{\mathbb E} \left[ #1 \right] }


\newcommand{\dist}{\operatorname{dist}}

\newcommand{\Real}{\mathop{\text{\rm Re}}}
\newcommand{\Imag}{\mathop{\text{\rm Im}}}

\definecolor{darkgreen}{rgb}{0, 0.6, 0}
\definecolor{airforceblue}{rgb}{0.36, 0.54, 0.66}
\definecolor{applegreen}{rgb}{0.55, 0.71, 0.0}
\definecolor{asparagus}{rgb}{0.53, 0.66, 0.42}
\definecolor{cadetblue}{rgb}{0.37, 0.62, 0.63}
\definecolor{cambridgeblue}{rgb}{0.64, 0.76, 0.68}
\definecolor{olivine}{rgb}{0.6, 0.73, 0.45}
\definecolor{rufous}{rgb}{0.66, 0.11, 0.03}
\definecolor{sangria}{rgb}{0.57, 0.0, 0.04}
\definecolor{neworange}{rgb}{1, 0.64, 0}
\definecolor{flowblue}{rgb}{0.4471,    0.6235,    0.8118}
\definecolor{lightsteelblue}{RGB}{176,196,222}
\definecolor{brownblue}{RGB}{222, 202, 176}




\pagenumbering{arabic}

\linespread{1}

\newtheorem{defi}{Definition}
\newtheorem{theo}{Theorem}
\newtheorem{prop}{Proposition}

\newtheorem{cor}{Corollary}
\newtheorem{prob}{Problem}

\newtheorem{asum}{Assumption}

\theoremstyle{remark}
\newtheorem{rem}{Remark}

\colorlet{fillcolor1}{lightsteelblue}
\colorlet{fillcolor2}{lightsteelblue!150}
\colorlet{fillcolor3}{lightsteelblue!200}
\colorlet{outlinecolor}{olivine!50!black!}
\colorlet{outlinecolor}{brownblue!50!black!}

\title{Uncertainty quantification and complex analyticity of the nonlinear Poisson-Boltzmann equation for the interface problem with random domains}

\author{Trevor Norton, Jie Xu, Brian Choi, Mark Kon, Julio Enrique Castrill\'on-Cand\'as}

\date{}
\crefname{prop}{proposition}{propositions}
\crefname{asum}{assumption}{assumptions}

\begin{document}
	
	\maketitle
	
\begin{abstract}
    
The nonlinear Poisson-Boltzmann equation (NPBE) is an elliptic partial differential equation used in applications such as protein interactions and biophysical chemistry (among many others). It describes the nonlinear electrostatic potential of charged bodies submerged in an ionic solution. The kinetic presence of the solvent molecules introduces randomness to the shape of a protein, and thus a more accurate model that incorporates these random perturbations of the domain is analyzed to compute the statistics of quantities of interest of the solution. When the parameterization of the random perturbations is high-dimensional, this calculation is intractable as it is subject to the curse of dimensionality. However, if the solution of the NPBE varies analytically with respect to the random parameters, the problem becomes amenable to techniques such as sparse grids and deep neural networks. In this paper, we show analyticity of the solution of the NPBE with respect to analytic perturbations of the domain by using the analytic implicit function theorem and the domain mapping method. Previous works have shown analyticity of solutions to linear elliptic equations but not for nonlinear problems. We further show how to derive \emph{a priori} bounds on the size of the region of analyticity. This method is applied to the trypsin molecule to demonstrate that the convergence rates of the quantity of interest are consistent with the analyticity result. Furthermore, the approach developed here is sufficiently  general enough to be applied to other nonlinear problems in uncertainty quantification.  

\end{abstract}

\smallskip
\noindent \textbf{Keywords.} Non-linear PDEs, Uncertainty Quantification, Sparse Grids, non-linear solvers, Interface problem

\noindent \textbf{MSC 2020} 65N35, 65N12, 65N15, 65C20, 35G20, 35J57, 35J60 

\section{Introduction}

Nonlinear elliptic partial differential equations (PDEs) are frequently used as models for applications in electrostatics. In particular, a salient problem in the field is the modeling of potential fields generated by molecules in solvents. The nonlinear Poisson-Boltzmann Equation (NPBE) serves as an accurate representation of the molecule-solvent interactions and is employed in molecular dynamics simulations and chemical applications \cite{Gray2018, Stein2019}. It has been used in the modeling of electrode-electrolyte interfaces \cite{Sundararaman2017, Sundararaman2018, Nattino2019} and in solvers such as the Adaptive Poisson–Boltzmann Solver (APBS) in determining the electrostatic potential for biomolecular processes \cite{Baker2001,jurrus2018improvements}.

The NPBE also finds applications in other various scientific disciplines.
It has been studied in many fields such as
applied mathematics \cite{Cai2013, Rubinstein1990, Li2009}, 
biophysical chemistry \cite{Ohshima2010, Edsall1958}, 
biochemistry \cite{Bergethon1998}, 
chemical physics \cite{Frenkel1946, Kirkwood1961}, 
colloids \cite{Butt2013, Israelachvili1991, Verwey1948, Evans1999, Hunter2001, Lyklema1991}, 
condensed matter physics \cite{Chaikin1995}, 
electrochemistry \cite{Schmickler2010, Bockris1970, Sparnaay1972}, 
electrolyte solutions \cite{Barthel1998, Friedman1962}, 
liquid state theory \cite{Hansen2013, Fawcett2004, March1984}, 
many-body theory \cite{Brout1963, Giuliani2003}, 
materials science \cite{Chavazaviel1999}, 
medical physics \cite{Hobbie1988}, 
molecular biology \cite{Sneppen2005}, 
physiology \cite{Bayliss1959, Hober1947}, 
physical chemistry \cite{Berry2000, Atkins1986}, 
plasma physics \cite{Morrison1967, Ichimaru1984},
polymer physics \cite{Muthukumar2011}, 
soft matter \cite{Doi2016, Dean2014, Holm2001, Poon2006}, 
solid-state physics \cite{Ashcroft1976, Kittel1996, McKelvey1966}, 
statistical mechanics \cite{Blum1992, Landau1958, McQuarrie1976}, 
surface science \cite{Israelachvili1991, Butt2013}, 
thermodynamics \cite{Glasstone1947, Lewis1961}, 
among others.

The NPBE can be written as 

\begin{equation}\label{npbe}
\begin{aligned}
	-\nabla \cdot (\epsilon(\bx) \nabla u(\bx)) + \overline{\kappa}^2(\bx) \sinh(u(\bx)) &= f(\bx),  &\text{for }\bx &\in \mcD, \\
	u(\bx) &= g(\bx), &\text{for } \bx &\in \partial \mcD,
\end{aligned}
\end{equation}
where \(\mcD\subset \bbR^3\) is the domain, \(\epsilon(\bx)>0\) is a dimensionless dielectric function, \(\overline \kappa(\bx)\geq 0\) is the modified Debye-H\"uckel parameter, and \(f(\bx)\) gives the charge of the particles in the region. The desired solution \(u\) represents the (dimensionless) potential function. One typically assumes the domain is separated into three parts: the solvent, the molecular region, and the ion-exclusion layer. From this assumption, \(\epsilon(\bx)\) becomes discontinuous at the interfaces between these regions, and so this PDE is sometimes referred to as an \emph{elliptic interface problem}. Variational methods can be used to show that a unique weak solution (i.e.\ a function \(u\in H^1\)) exists under certain conditions \cite{holst1994poisson}.

When computing molecular dynamics (MD), the presence of  thermal fluctuations and solvent interactions (among other factors) can lead to random conformations of the molecules, and more accurate models incorporate this stochasticity. For instance, stochastic initial velocities are used in \cite{Allen2004,Neumaier1997} when computing MD. Other approaches to MD which factor in stochasticity include Langevin dynamics \cite{Ceriotti2009, Hanggi1995, Jung1987} and Markov random models \cite{Xia2013}. In this paper we assume that the random domain conformations are represented using a finite dimensional model with $N$ random variables such as Karhunen-Lo\`{e}ve expansions or similar random field stochastic representations.

Quadrature methods can compute stochastic measures for the Quantity of Interest (QoI) of the potential field under random configurations of the molecules. However, for each quadrature point we must compute the solution of the NPBE. As the number of dimensions, $N$, increases the calculation quickly becomes intractable.

One strategy to reducing the cost of the calculations is to show that the QoI varies analytically with respect to the stochastic parameters. In this case, one can compute the \(N\)-dimensional quadrature using a sparse grid \cite{nobile2008a}, which gives sub-exponential or algebraic decay in the error as a function of the number of interpolation points. Thus the ``curse of dimensionality'' from the \(N\) can be ameliorated, and the problem becomes tractable. If the QoI depends analytically on the solution \(u\), then it is sufficient to prove that the solution varies analytically with respect to the stochastic parameters.

Previous studies in uncertainty quantification (UQ) have explored the analyticity of solutions to linear partial differential equations with random domains \cite{castrillon2016,castrillon2021stochastic}. The authors in \cite{Heitzinger2018} explore the utilization of stochastic collocation and Galerkin methods for the NPBE. The NPBE is treated as a semi-linear stochastic boundary valued problem and the existence of a unique solution is proved. This approach extends the existence and uniqueness result found in the deterministic case \cite{holst1994poisson} and follows a similar proof strategy. However, due to the absence of analytic regularity results, convergence rates for implementing the stochastic collocation method are not derived. For the case of elliptic interface problems, the regularity of point evaluations of solutions and how to approximate them with Deep Neural Networks (DNNs) have been studied \cite{scarabosio2022deep}. Furthermore, in \cite{Opschoor2021} the authors show  that given a holomorphic map, such as an analytic extension of the solution of a PDE, there exists a DNN with exponential accuracy with respect to the dimensionality of the DNN.

In the case of the NPBE, our objective is to demonstrate that analytical deformations of the domain result in the analytic variation of the solution \(u\). This particular investigation introduces two challenges that were not addressed in previous research:

\begin{enumerate}[label=(\arabic*)]
\item \emph{Nonlinearity}. Previous results have been proved by showing that the solutions satisfy the Cauchy equations and are thus analytic. This becomes difficult to do with the nonlinearity introduced by \(\sinh(u)\). A more subtle complication due to the nonlinearity is that outputs of the function might not end up in the desired space. That is, for potential weak solutions \(u \in H^1\), it is not guaranteed that \(\sinh(u)\in L^2\).
\item \emph{Interfaces.}  In \cite{choi2021existence} the analyticity properties of the NPBE are studied where $\epsilon$ exhibits some degree of regularity. However in our case, the assumption that $\epsilon$ is Lipschitz continuous on the entire domain is relaxed to account for the interface problem.  
\end{enumerate}

The main strategy of this paper is to use the implicit function theorem and the domain mapping method (introduced in \cite{castrillon2016analytic}) to show that \(u\) is analytic with respect to the stochastic parameters. This avoids trying to show the Cauchy equations hold, and it is a general strategy that can be applied to other nonlinear PDEs. In order to apply the implicit function theorem, we have to specify a function domain for our solution. As noted above, we cannot take \(u\in H^1\) since this does not imply \(\sinh(u)\) is in \(L^2\). If \(u\) were in \(H^2\) then the fact that the Sobolev space is a Banach algebra (see \cite[Thm.~4.39]{adams2003sobolev} for a proof) would let us conclude the nonlinearity is in \(L^2\), but the discontinuity of \(\epsilon\) in the problem means that \(u\) is not (weakly) differentiable across the interfaces. However, we can instead define a ``piecewise \(H^2\)'' space, in which \(u\) naturally lies. Thus we can then apply the implicit function theorem to get analyticity. From there, estimates of the rate of convergence of the sparse grid method can be obtained by getting \emph{a priori} bounds on the region of analyticity of the solution. The results here are also notable in that they can easily be generalized to other UQ problems that come from nonlinear PDEs with interfaces.

The paper will be structured as follows. In \cref{linear-pde-section}, we introduce the problem of a linear elliptic PDE with interfaces. We introduce a suitable Banach space in which a strong solution naturally exists: a ``piecewise \(H^2\)'' space denote by \(\mcH(U)\). It is then shown that linear problem has unique strong solutions that induces an isomorphism between \(\mcH(U)\) and the Banach space of the forcing functions. In \cref{npbe-reference-domain-section}, the NPBE is reformulated onto a reference domain with solutions in \(\mcH(U)\). From there, the implicit function theorem is used to get an analytic mapping from the parameter space to the solutions of the NPBE. Furthermore, we give details on how to get \emph{a priori} bounds on the size of the region of analyticity after applying the implicit function theorem. \Cref{sparsegrids} gives an overview of applying sparse grids to the efficient computation of integrals of analytic functions. Finally, numerical experiments are performed in \cref{numerics} to demonstrate the convergence results.

\section{The Linear Elliptic PDE with Interfaces}\label{linear-pde-section}

\subsection{Definitions and notations}

We first consider a linear elliptic PDE with possibly discontinuous coefficients at interfaces. For our problem, the domain is split up into three subdomains. The coefficients of the PDE are sufficiently regular on each subdomain, and the subdomains are nested within each other. The boundary between the subdomains form the interfaces in our problem. It is straightforward to generalize this to an arbitrary number of nested subdomains.

\begin{defi}
We say that a connected, bounded open set \(U\subset \bbR^3\) is \textbf{properly decomposed} into \(l\) subdomains \(U_1,U_2,\ldots, U_l\) if the following holds: There is a sequence of compactly embedded subsets \[U^{(l-1)} \subset\subset U^{(l-2)} \subset\subset \cdots \subset \subset U^{(1)} \subset\subset U \] where 
\begin{align*}
	U_l &= U \setminus \overline{U^{(1)}} \\
	U_{l-1} &= U^{(1)} \setminus \overline{U^{(2)}} \\
	&\hspace{0.5em}\vdots \\
	U_2 &= U^{(l-2)} \setminus \overline{U^{(l-1)}} \\
	U_1 &= U^{(l-1)}.
\end{align*}
We define the interfaces \(I_1, \ldots, I_{l-1}\) to be \(I_i = \partial U_i \cap \partial U_{i+1}\) for \(i = 1, \ldots, l-1\).
\end{defi}

\begin{figure}[ht]
\centering
\begin{tikzpicture}[scale=0.85]
	\draw[fill=fillcolor1, draw=outlinecolor] plot [smooth cycle] coordinates {(1,2) (2,6) (1.8, 8) (5,10) (9,9) (10,6) (8,3) (6,1) (3,1)};
	\draw node at (2,2.2) {\LARGE$U_3$};
	\draw[fill=fillcolor2, draw=outlinecolor] plot [smooth cycle] coordinates {(3,3) (2.74,4.5) (3.25, 6.7) (4, 8.5) (5.4, 9) (7, 8.9) (7.5, 8) (6.5, 5) (7,3) (4,2)};
	\draw node at (4,3) {\LARGE$U_2$};
	\draw[fill=fillcolor3,draw=outlinecolor] plot [smooth cycle] coordinates {(4,4) (4,6.5) (5.5, 7.2) (6, 7) (5.7,5.5) (5,4)};
	\draw node at (4.75,5.5) {\LARGE$U_1$};
	\draw node at (1.4, 8.75) {\Large \color{outlinecolor}$I_3 = \partial U$};
	\draw node at (3, 7.75) {\Large \color{outlinecolor}$I_2$};
	\draw node at (4.2, 7.2) {\Large \color{outlinecolor}$I_1$};
\end{tikzpicture}
\caption{Here a set \(U\) is properly decomposed into the three subdomains \(U_1\), \(U_2\), and \(U_3\). The interface \(I_1\) is where the boundaries of \(U_1\) and \(U_2\) meet, and the interface \(I_2\) is where the boundaries of \(U_2\) and \(U_3\) meet. The boundary \(\partial U\) is also referred to as \(I_3\).} 
\end{figure}
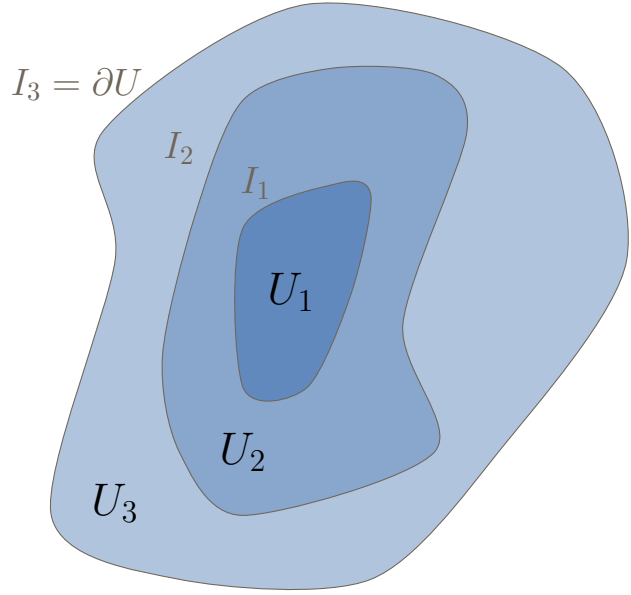

\begin{rem}
The results for this section could also be generalized to the case where subdomains are no longer strictly nested within each other. However, we will use the above definition since it is sufficient for our application and it keeps the notation simple.
\end{rem}

For convenience, we shall refer to \(\partial U\) by \(I_l\). We now assume that \(U\subset \bbR^3\) is properly decomposed into \(l\) subdomains \(U_1,\ldots, U_l\) where the interfaces and the boundary of \(U\) are all of class \(C^{1,1}\). We choose our domain to be in \(\bbR^3\) for our application; other dimensions are possible, but the choices of Sobolev spaces will be affected. Denote by \(\nu_k\) the outward facing normal for the surface \(I_k\). On each of these surfaces we can define trace operators. For \(1\leq k \leq l-1\), there are two trace operators depending on if we take the domain to be \(H^1(U_k)\) or \(H^1(U_{k+1})\). Let 
\begin{equation*}
	\gamma^+_k : H^1(U_k) \to H^{1/2}(I_k)
\end{equation*}
be the trace operator from the domain \(U_k\) to its outer boundary (for \(1\leq k \leq l\)), and similarly let
\begin{equation*}
	\gamma^-_k : H^1(U_{k+1}) \to H^{1/2}(I_k)
\end{equation*}
be the trace operator from the domain \(U_{k+1}\) to its inner boundary (for \(1\leq k\leq l-1\)).

Define a second-order elliptic operator \(\mcP_k\) on each \(U_k\) by
\begin{equation}\label{second-order-operator-k}
	\mcP_k u := - \sum_{i=1}^3 \sum_{j=1}^3 \partial_i(a_{ij} \partial_j u) + c u
\end{equation}
where \(a_{ij} \in C^{0,1}(\overline{U_k})\) for each \(i,j=1,2,3\) and \(k=1,2,\ldots, l\) and \(c \in L^\infty(U)\). We assume that \(a_{ij} = a_{ji}\) for all \(i\) and \(j\). We further assume that each \(\mcP_k\) satisfies a uniform ellipticity condition on \(U_k\), i.e., there exists a constant \(\theta > 0\) such that 
\begin{equation*}
	\sum_{i,j=1}^3 a_{ij}(x) \xi_i\xi_j \geq \theta |\xi|^2
\end{equation*}
for a.e.\ \(x\in U_k\) and all \(\xi \in \bbR^3\). We can choose \(\theta\) independently of \(k\). The operator \(\mcP_k\) is naturally associated with the bilinear map \(\Phi_k\) given by
\begin{equation*}
	\Phi_k(u,v) = \int_{U_k} \left( \sum_{i=1}^3 \sum_{j=1}^3 a_{ij} \partial_j u \partial_i v + c u v \right)\, \mathrm d x
\end{equation*}

The operators \(\mcP_k\) define co-normal derivatives on the interfaces and boundary. We define \(\mcB^{\pm}_k\) by 
\begin{equation*}
	\mcB_k^\pm u := \sum_{i=1}^3 (\nu_k)_i \gamma_k^{\pm} \left( \sum_{j=1}^3 a_{ij} \partial_j u \right),
\end{equation*}
where \((\nu_k)_i\) denotes the \(i\)th component of the normal vector \(\nu_k\). This gives maps \(\mcB_k^+ : H^2(U_k)\to H^{1/2}(I_k)\) and \(\mcB_k^- :H^2(U_{k+1}) \to H^{1/2}(I_k)\). By specifying the value \(f\in H^{-1}(U_k)\) of \(\mcP_k u\), the conormal derivative can be extended to \(H^1(U_k)\) functions. If the choice of \(f\in H^{-1}(U_k)\) is clear, then we will simply say the distribution \(\mcB^\pm_k u \in H^{-1/2}(I_k)\) is the conormal derivative of \(u\).

These definitions allow us to use the following Green's identity for \(2\leq k \leq l\):
\begin{equation}\label{greens-1}
	\Phi_k(u,v) = (\mcP_k u,v)_{U_k} + (\mcB^+_k u, \gamma_k^+ v)_{I_k} - (\mcB^-_{k-1}u, \gamma_{k-1}^- v)_{I_{k-1}}, \quad \text{for all } u\in H^2(U_k), v\in H^1(U_k).
\end{equation}
And in the case of \(k=1\) we have 
\begin{equation}\label{greens-2}
	\Phi_1(u,v) = (\mcP_1 u,v)_{U_1} + (\mcB^+_1 u, \gamma_1^+ v)_{I_1}, \quad \text{for all } u\in H^2(U_1), v\in H^1(U_1).
\end{equation}

\subsection{Weak and strong forms of elliptic problem with interfaces}\label{subsection-linear-elliptic-problems}

Similar to standard elliptic PDE theory, the existence and uniqueness of the weak solution for the discontinuous interface problem 
will first be established. Consequently, this result will be used to show existence and uniqueness of the strong solution. However, to motivate the weak formulation, we first start off by defining the strong form of the problem.

Normally, the strong solution of an elliptic PDE would lie in the space \(H^2(U)\), but this cannot be the case for our problem since we can lose regularity at the interfaces. The next best option is to require a ``piecewise \(H^2\) regularity'' for the strong solution, where the function is \(H^2\) when restricting to the subdomains.

\begin{defi}
	Let 
	\begin{equation*}
		\mcH(U;U_1,U_2,\ldots, U_l) := \{u \in H^1(U) \mid u|_{U_k} \in H^2(U_k) \text{ for } k=1,2,\ldots, l\}.
	\end{equation*}
	This is a Banach space with a norm given by
	\begin{equation*}
		\| u \|_{\mcH} = \|u\|_{H^1(U)} + \sum_{k=1}^l \left\| u|_{U_k} \right\|_{H^2(U_k)}.
	\end{equation*}
\end{defi}
This Banach space depends on our decomposition of \(U\), but when this decomposition is clear we will simply write \(\mcH(U)\) instead of \(\mcH(U;U_1,U_2,\ldots, U_l)\). Throughout the paper, we will denote \(u|_{U_k}\) or \(f|_{U_k}\) by \(u_k\) or \(f_k\), respectively, to cut down on notation.

Requiring the strong solution \(u\) to lie in \(\mcH(U)\) 
is insufficient to define a unique strong solution for the elliptic problem. If the strong solution were only required to satisfy \(\mcP_k u_k = f_k\) on each \(U_k\) along with a Dirichlet boundary condition, then infinitely many solutions would be possible; for instance, in the case where \(c \equiv 0\) adding a constant value to \(u_1\) on \(U_1\) would give another solution to the problem. Unique solutions exist if certain jump conditions are satisfied at the interfaces.

For \(u\in \mcH(U;U_1,U_2,\ldots,U_l)\), define 
\begin{equation*}
	[\mcB_k u]_{I_k} = \mcB_k^+u_k - \mcB^-_k u_{k+1} \in H^{1/2}(I_k).
\end{equation*}
Then the strong form of the elliptic PDE with interfaces is stated as follows:

\begin{prob}[Strong form of elliptic PDE with interfaces]\label{strong-linear-interface-problem}
	Suppose \(U\) is properly decomposed into subdomains \(U_1,U_2,\ldots,U_l\), where the interfaces and boundary are of class \(C^{1,1}\). Let \(\mcP_k\) be defined as in \cref{second-order-operator-k}. Fix \(f \in L^2(U)\), \(g_k\in H^{1/2}(I_k)\) for \(k=1,2,\ldots, l-1\), and \(g_l \in H^{3/2}(\partial U)\). A function \(u \in \mcH(U;U_1,U_2,\ldots, U_l)\) is a strong solution of the elliptic PDE with interfaces if 
	\begin{align}
		\mcP_k u_k &= f_k & &\text{for } k=1,2,\ldots, l, \\
		\left[\mcB_k u\right]_{I_k} &= g_k & &\text{for } k = 1,2,\ldots, l-1, \\
		\gamma^+_l u_l &= g_l. 
	\end{align}
\end{prob}

The boundary condition can be set to zero by setting \(w\in H^2(U)\) to be such that \(\gamma_l^+ w_l = g_l\). Then we can break up \(u\) into \(u = \tilde u + w\) where \(\tilde u \in H^1_0(U)\cap \mcH(U)\). The weak formulation of the problem is derived by taking
\begin{equation*}
	\mcP \tilde u = f - \mcP w
\end{equation*}
(where \(\mcP\) is the differential operator that is locally \(\mcP_k\) on each \(U_k\)), multiplying each side of the equation by \(v\in H^1_0(U)\), and integrating over \(U\). Applying \cref{greens-1,greens-2} and summing up the terms gives us 
\begin{equation}\label{formal-integration}
	\sum_{k=1}^l \Phi_k(\tilde u_k , v_k) = \left(\sum_{k=1}^l  ( f_k, v_k)_{U_k}  - (\mcP_k w_k, v_k)_{U_k} \right)+  \left(\sum_{k=1}^{l-1} (g_k, \gamma_k^+ v_k)_{I_k} - ([\mcB_k w]_{I_k} , \gamma_k^+ v_k)_{I_k}\right).
\end{equation}
Note that since \(v\in H^1_0(U)\), the functions \(\gamma^+_k v_k\) and \(\gamma^-_{k+1} v_{k+1}\) will be equal, which allowed us to combine terms in the equation above. \Cref{formal-integration} makes sense even in the case where \(\tilde u\) is not in \(\mcH(U)\), so we use this equation to define a weak solution in \(H^1_0(U)\).

\begin{prob}[Weak form of elliptic PDE with interfaces]\label{weak-linear-interface-problem}
	Suppose \(U\) is properly decomposed into subdomains \(U_1,U_2,\ldots,U_l\), where all interfaces and the boundary are of class \(C^{1,1}\). Let \(\mcP_k\) be defined as in \cref{second-order-operator-k}, and let \(\Phi_k\) be the bilinear maps associated with \(\mcP_k\). Fix \(f \in L^2(U)\), \(g_k\in H^{1/2}(I_k)\) for \(k=1,2,\ldots, l-1\), and \(g_l \in H^{3/2}(\partial U)\). Take \(w \in H^2(U)\) so that \(\gamma_l^+w = g_l\). A function \(u = \tilde u + w\) with \(\tilde u \in H^1_0(U)\) is a weak solution to the elliptic PDE with interfaces if
	\begin{equation}\label{weak-eqn}
		\sum_{k=1}^l \Phi_k(\tilde u_k , v_k) = \left(\sum_{k=1}^l  ( f_k, v_k)_{U_k}  - (\mcP_k w_k, v_k)_{U_k} \right)+  \left(\sum_{k=1}^{l-1} (g_k, \gamma_k^+ v_k)_{I_k} - ([\mcB_k w]_{I_k} , \gamma_k^+ v_k)_{I_k}\right), \quad \forall v \in H^1_0(U).
	\end{equation}
\end{prob}

\begin{rem}
	The formulation of \cref{weak-linear-interface-problem} agrees with the weak form of the linear Poisson Bolztmann equation in the case where \(g_1,g_2,\ldots, g_{l-1}\) are set to zero (c.f.\ \cite{holst1994poisson}), which suggests that this is the appropriate formulation of the weak problem for our application. Although in practice there will be no forcing terms on the interfaces, allowing the possibility of non-zero \(g_k\)'s is of theoretical importance when we later apply the implicit function theorem.
\end{rem}

Showing that \cref{weak-linear-interface-problem} has unique solutions follows from applying the Lax-Milgram theorem in a similar way to how it is applied in standard linear elliptic theory.  

\begin{prop}\label{existence-uniqueness-linear-weak-solns}
	Suppose \(c\in L^\infty(U)\) is non-negative. Then we have a unique solution to \cref{weak-linear-interface-problem}.
\end{prop}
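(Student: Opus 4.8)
The plan is to cast Problem~\ref{weak-linear-interface-problem} as a single variational equation $B(\tilde u, v) = F(v)$ for all $v \in H^1_0(U)$ on the Hilbert space $H^1_0(U)$, where $B(\tilde u, v) := \sum_{k=1}^l \Phi_k(\tilde u_k, v_k)$ and $F$ is the right-hand side of \cref{weak-eqn}, and then to invoke the Lax--Milgram theorem. Concretely, I would verify: (i) $B$ is bounded on $H^1_0(U)\times H^1_0(U)$; (ii) $B$ is coercive on $H^1_0(U)$; and (iii) $F$ is a bounded linear functional on $H^1_0(U)$.

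Boundedness of $B$ is routine: each $a_{ij}\in C^{0,1}(\overline{U_k})$ is bounded and $c\in L^\infty(U)$, so by Cauchy--Schwarz $|\Phi_k(u_k,v_k)| \le C_k \|u_k\|_{H^1(U_k)}\|v_k\|_{H^1(U_k)}$, and since $\|u_k\|_{H^1(U_k)}\le \|u\|_{H^1(U)}$, summing over $k$ gives $|B(u,v)|\le C\|u\|_{H^1(U)}\|v\|_{H^1(U)}$. For coercivity, the point is that $U_1,\dots,U_l$ tile $U$ up to a null set and $\nabla u$ restricts to $\nabla u_k$ on each $U_k$; hence by uniform ellipticity $\sum_{k=1}^l \int_{U_k}\sum_{i,j} a_{ij}\partial_j u\,\partial_i u\,\mathrm d x \ge \theta \int_U |\nabla u|^2\,\mathrm d x$, while $\sum_{k=1}^l\int_{U_k} c\,u^2\,\mathrm d x \ge 0$ because $c\ge 0$. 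Combining with the Poincaré inequality on the bounded set $U$ (applicable since $v\in H^1_0(U)$) yields $B(u,u)\ge \alpha\|u\|_{H^1(U)}^2$ with $\alpha>0$ depending only on $\theta$ and $U$.

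For (iii), note $f\in L^2(U)$ and, since $w\in H^2(U)$, also $\mcP_k w_k\in L^2(U_k)$, so the volume terms in \cref{weak-eqn} are bounded by $C\|v\|_{H^1(U)}$. For the interface terms, $w\in H^2(U)$ gives $[\mcB_k w]_{I_k}\in H^{1/2}(I_k)\hookrightarrow L^2(I_k)$ and $g_k\in H^{1/2}(I_k)\hookrightarrow L^2(I_k)$, so the pairings are dominated by $\|\gamma_k^+ v_k\|_{L^2(I_k)}$, which the trace theorem bounds by $C\|v_k\|_{H^1(U_k)}\le C\|v\|_{H^1(U)}$. Hence $F\in (H^1_0(U))'$. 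Lax--Milgram then yields a unique $\tilde u\in H^1_0(U)$ with $B(\tilde u,v)=F(v)$ for all $v$, and $u=\tilde u + w$ is a weak solution; conversely, any weak solution has trace $g_l$ on $\partial U$, so the difference of two of them lies in $H^1_0(U)$ and is annihilated by the coercive form $B$, hence is zero --- giving uniqueness of $u$ independently of the auxiliary lift $w$.

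The only mildly delicate bookkeeping is the global coercivity step: one must observe that summing the subdomain ellipticity bounds reassembles $\theta\|\nabla u\|_{L^2(U)}^2$, rather than being stuck with a subdomain-by-subdomain estimate with no access to a global Poincaré constant; this works precisely because the $U_k$ partition $U$ and $u\in H^1(U)$, so $\nabla u\in L^2(U)$ globally. One should also keep in mind, as already noted after \cref{formal-integration}, that the combined interface terms in \cref{weak-eqn} are well defined for $v\in H^1_0(U)$ because $\gamma_k^+ v_k = \gamma_k^- v_{k+1}$ across each interface. Beyond these observations the argument is the textbook Lax--Milgram proof.
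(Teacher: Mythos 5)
Your proposal is correct and is exactly the Lax--Milgram argument the paper alludes to (the paper itself only remarks that the result ``follows from applying the Lax--Milgram theorem in a similar way to how it is applied in standard linear elliptic theory''); you have simply supplied the routine verifications of boundedness, coercivity via the subdomain ellipticity plus $c\ge 0$ and Poincar\'e, and continuity of the right-hand side via the trace theorem. No gaps.
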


\begin{rem}
	The regularity of the data can be loosened in \cref{existence-uniqueness-linear-weak-solns}; for instance, we can let \(f\in H^{-1}(U)\) and still have unique weak solutions. However, for our purposes we do not need these low regularity results. 
\end{rem}

The uniqueness of weak solutions can now be used to show the existence and uniqueness of strong solutions.

\begin{theo}\label{linear-regularity-weak-soln}
	Suppose that \(c\in L^\infty(U)\) is non-negative. Then there exists a unique solution \(u\in \mcH(U)\) to \cref{strong-linear-interface-problem}. Moreover, \cref{strong-linear-interface-problem} defines an isomorphism between \(\mcH(U)\) and \(L^2(U) \times \prod_{k=1}^{l-1} H^{1/2} (I_k) \times H^{3/2}(I_l)\) by associating solutions \(u\) with data \((f,g_1,g_2,\ldots, g_{l-1}, g_l)\).
\end{theo}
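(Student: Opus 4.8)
The plan is to bootstrap the weak solution supplied by \cref{existence-uniqueness-linear-weak-solns} up to piecewise $H^2$ regularity, deduce from it the strong-form equations of \cref{strong-linear-interface-problem}, and then obtain the isomorphism claim from the bounded inverse theorem. First I would fix data $(f,g_1,\ldots,g_{l-1},g_l)$ and reduce to a homogeneous outer boundary condition: since $g_l\in H^{3/2}(I_l)$, a bounded right inverse of the trace $\gamma_l^+\colon H^2(U)\to H^{3/2}(I_l)$ yields $w\in H^2(U)$ with $\gamma_l^+ w_l=g_l$, and because the $a_{ij}$ are Lipschitz on each $\overline{U_k}$ the reduced interior data $f_k-\mcP_k w_k$ lies in $L^2(U_k)$ while the reduced interface data $g_k-[\mcB_k w]_{I_k}$ lies in $H^{1/2}(I_k)$. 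Let $\tilde u\in H^1_0(U)$ be the unique weak solution from \cref{existence-uniqueness-linear-weak-solns} and set $u=\tilde u+w$; it remains to show $\tilde u\in\mcH(U)$ and that $u$ satisfies the strong problem.

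The regularity step is the heart of the matter. Away from all interfaces one has interior $H^2$ regularity on balls compactly contained in a single $U_k$ by the classical difference-quotient argument, using the Lipschitz coefficients and uniform ellipticity; near the outer boundary $I_l$, where $\tilde u$ vanishes, boundary elliptic regularity (valid since $I_l$ is $C^{1,1}$) gives $\tilde u_l\in H^2$ there. The essential point is $H^2$ regularity across each interior interface $I_k$. Since the decomposition is strictly nested with compact inclusions, each $I_k$ has an open neighborhood $V_k$ meeting only $U_k$ and $U_{k+1}$; localizing in $V_k$ and flattening $I_k$ by a $C^{1,1}$ diffeomorphism (which preserves the Lipschitz regularity of the coefficients, the uniform ellipticity, and membership in $H^1$ and in the piecewise-$H^2$ class), the problem becomes a transmission problem across a flat hyperplane in which the trace of $\tilde u$ is continuous (automatic from $\tilde u\in H^1$) and the conormal jump equals the $H^{1/2}$ datum $g_k-[\mcB_k w]_{I_k}$. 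Tangential difference quotients commute with this transmission coupling, so Nirenberg's argument yields $\partial_\tau\partial_i\tilde u\in L^2$ on each side for all tangential directions $\tau$; uniform ellipticity (the quantity $\nu_k\cdot a\,\nu_k$ is bounded below) then lets one solve the PDE pointwise for the single remaining pure-normal second derivative on each side, giving $\tilde u_k\in H^2(V_k\cap U_k)$ and $\tilde u_{k+1}\in H^2(V_k\cap U_{k+1})$. A partition of unity subordinate to the interior balls, the boundary neighborhood, and the $V_k$ then yields $\tilde u\in\mcH(U)$, hence $u\in\mcH(U)$, with $\|u\|_{\mcH}\le C\big(\|f\|_{L^2(U)}+\sum_{k=1}^{l-1}\|g_k\|_{H^{1/2}(I_k)}+\|g_l\|_{H^{3/2}(I_l)}\big)$ for a constant $C$ independent of the data.

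Once $u\in\mcH(U)$ the Green's identities \cref{greens-1,greens-2} apply, so substituting $u=\tilde u+w$ into the weak equation \cref{weak-eqn}, applying Green's identity to both $u$ and $w$ (using $\gamma_l^+ v_l=0$ for $v\in H^1_0(U)$), and rearranging gives $\sum_{k=1}^l(\mcP_k u_k-f_k,v_k)_{U_k}+\sum_{k=1}^{l-1}([\mcB_k u]_{I_k}-g_k,\gamma_k^+ v_k)_{I_k}=0$ for every $v\in H^1_0(U)$. Taking $v$ supported in a single $U_k$ forces $\mcP_k u_k=f_k$ in $L^2(U_k)$; taking $v$ supported in a neighborhood of a single $I_k$ and using that $v\mapsto\gamma_k^+ v$ maps such test functions onto all of $H^{1/2}(I_k)$ forces $[\mcB_k u]_{I_k}=g_k$; and $\gamma_l^+ u_l=g_l$ by the choice of $w$. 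Hence $u$ solves \cref{strong-linear-interface-problem}, and uniqueness is immediate since any strong solution is in particular a weak solution, which is unique by \cref{existence-uniqueness-linear-weak-solns}. Finally, the map $\mcL\colon\mcH(U)\to L^2(U)\times\prod_{k=1}^{l-1}H^{1/2}(I_k)\times H^{3/2}(I_l)$, $u\mapsto\big((\mcP_k u_k)_k,([\mcB_k u]_{I_k})_k,\gamma_l^+ u_l\big)$, is bounded (by boundedness of $\mcP_k\colon H^2(U_k)\to L^2(U_k)$, of $\mcB_k^\pm$ into $H^{1/2}$, and of the trace), and, by what was just shown, a linear bijection between Banach spaces; the bounded inverse theorem then makes it an isomorphism.

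I expect the interface regularity — flattening the $C^{1,1}$ interface, the tangential difference-quotient estimate for the transmission problem, and recovering the normal second derivative from the equation despite the coefficient jump — to be the main obstacle; the remaining steps are routine adaptations of standard elliptic theory and the open mapping theorem. One could alternatively cite existing regularity results for elliptic transmission problems, but a self-contained treatment is preferable since the same mechanism reappears for the NPBE.
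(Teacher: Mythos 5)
Your proof is correct, and the overall strategy coincides with the paper's: pass from the weak solution of \cref{existence-uniqueness-linear-weak-solns} to a piecewise $H^2$ regularity statement, use the Green's identities \cref{greens-1,greens-2} to recover the strong-form equations, obtain uniqueness by noting that any strong solution is a weak solution, and deduce the isomorphism from boundedness and bijectivity of the operator $u\mapsto\big((\mcP_k u_k)_k,([\mcB_k u]_{I_k})_k,\gamma_l^+u_l\big)$. The one place where you genuinely diverge is the regularity step, which you identify correctly as ``the heart of the matter'': the paper simply cites Theorem~4.20 of McLean's \emph{Strongly Elliptic Systems and Boundary Integral Equations}, which delivers $u_k\in H^2(U_k)$ directly from $u\in H^1(U)$, $\mcP_k u_k\in L^2(U_k)$, $[\mcB_k u]_{I_k}\in H^{1/2}(I_k)$, and $\gamma_l^+u_l\in H^{3/2}(I_l)$, whereas you sketch a self-contained transmission-regularity argument via localization, $C^{1,1}$ flattening of each $I_k$ (using the nestedness to isolate a neighborhood meeting only $U_k$ and $U_{k+1}$), Nirenberg tangential difference quotients, recovery of the pure normal second derivative from the equation and uniform ellipticity, and a partition of unity. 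That is essentially a proof sketch of the cited theorem rather than a shortcut around it; it is more informative but considerably longer, and the paper's choice to outsource it keeps the argument terse. Your appeal to the bounded inverse theorem at the end matches the paper's closing remark that ``the isomorphism result follows \ldots from \ldots continuity of the solution map''; the a priori estimate $\|u\|_{\mcH}\lesssim\|f\|_{L^2}+\sum\|g_k\|_{H^{1/2}}+\|g_l\|_{H^{3/2}}$ that your regularity argument produces would also give the continuity of the inverse directly, without invoking the open mapping theorem.
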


\begin{proof}
	From \cref{existence-uniqueness-linear-weak-solns}, it follows that the weak solution \(u \in H^1(U)\). To show that \(u \in \mcH(U)\), we can appeal to \cite[Thm.~4.20]{mclean2000strongly}. Namely we have that 
	\begin{equation*}
		\begin{aligned}
			u_k &\in H^1(U_k), &  &\text{for } k=1,2,\ldots, l, \\
			[\mcB_k u]_{I_k} &\in H^{1/2}(I_k), &  &\text{for } k=1,2,\ldots,l-1, \text{ and } \\
			\gamma_l^+ u_l &\in H^{3/2}(I_l),
		\end{aligned}
	\end{equation*}
	which implies that \(u_k \in H^2(U_k)\) for each \(k=1,2, \ldots, l\) and thus \(u \in \mcH(U)\). Applying the Green's identities in  \cref{greens-1,greens-2} demonstrates that \(u\) is a solution of \cref{strong-linear-interface-problem}. Since solutions of \cref{strong-linear-interface-problem} are also solutions of \cref{weak-linear-interface-problem}, the strong solution \(u\) is unique. The isomorphism result follows immediately from existence and uniqueness of the strong solution and the continuity of the solution map with respect to the boundary and forcing data.
\end{proof}

\section{The Nonlinear Poisson Bolztmann Equation on a Reference Domain}\label{npbe-reference-domain-section}

\subsection{Existence of Region of Analyticity}

We now return to our main focus of the paper: solutions for the NPBE. The NPBE given in \cref{npbe} is a nonlinear elliptic PDE. For our applications, we will have the main domain properly decomposed into three subdomains. We also allow for the possibility of random perturbations of the boundary and interfaces. 

Let \(\Omega\) be the sample space. Each outcome \(\omega \in \Omega\) designates a random domain \(\mcD(\omega)\) on which the NPBE will evaluated. The domain \(\mcD(\omega)\) is properly decomposed into three subdomains \(\mcD_1(\omega)\), \(\mcD_2(\omega)\), and \(\mcD_3(\omega)\) with interfaces \(\mcI_1(\omega)\) and \(\mcI_2(\omega)\). The parameters \(\epsilon\), \(\overline{\kappa}^2\), \(g\), and \(f\) will also depend on \(\omega\). From here one can define strong and weak solutions of the NPBE on the stochastic domain in a similar way to that in \cite{castrillon2016analytic}. In practice, one usually assumes that the value of each parameter is given as a function of the random vector \(\mathbf Y(\omega) = (Y_1(\omega), Y_2(\omega),\ldots, Y_N(\omega))\) taking values on the compact set \(\Gamma \subset \bbR^N\) and with known density  \(\rho :\Gamma \to \bbR_{\geq0}\). Typically \(\Gamma = [-1,1]^N\) with \(\rho\) a truncated normal distribution, although the distribution can be more general. Often the parameters will vary analytically with respect to the value of \(\mathbf Y\) and are usually polynomials in \(\mathbf Y\). Thus the NPBE can be stated as a problem with parameters \(\by \in \Gamma \subset \bbR^N\).

To parameterize the random domain, we assume that the random domain has a pullback onto some fixed open set for each \(\omega\). In particular, take \(U\subset \bbR^3\) to be a bounded, open set that is properly decomposed into three subdomains \(U_1\), \(U_2\), and \(U_3\) with interfaces \(I_1 = \partial U_1 \cap \partial U_2\) and \(I_2 = \partial U_2 \cap \partial U_3\). The interfaces and boundary are taken to have \(C^{1,1}\) regularity. We assume for each \(\by \in \Gamma\) that there is \(F(\cdot ; \by ) \in C^2_{\mathrm{diff}}(\bbR^3,  \bbR^3)\) such that 
\begin{equation*}
	F(U_k;\by) = \mcD_k(\by) \quad \text{for } k = 1,2,3
\end{equation*}
and 
\begin{equation*}
	F(I_k;\by) = \mcI_k(\by) \quad \text{for } k = 1,2.
\end{equation*}
The Jacobian matrix of \(F(\cdot; \by)\) will be denoted by \(J(\cdot;\by)\). Note that since \(F\) is a \(C^2\) diffeomorphism, the regularity of the interfaces and the boundary are preserved under the mapping. To distinguish between the coordinates in each domain, we will denote by \(\br\) elements of \(U\) and by \(\bx\) elements of \(\mcD(\by)\). Similarly \(\nabla_{\br}\) and \(\nabla_{\bx}\) will be used to distinguish between the derivatives in \(U\) and \(\mcD(\by)\), respectively. 	

Hence, we can define the strong form of the NPBE on the random domain. Again, we will use subscripts to denote restrictions to the subdomains (e.g.\ \(u_k = u |_{\mcD_k}\)). The trace operators \(\gamma_k^{\pm}\) are defined similar to those in \cref{subsection-linear-elliptic-problems}. Also, we have the conormal derivative \(\mcB_k\) using the elliptic operator defined by \(a_{ij} (\bx;\by) = \epsilon(\bx;\by) \delta_{ij}\), where \(\delta_{ij}\) is the Kronecker delta.

\begin{prob}\label{strong-npbe-random-domain}
	The function \(\Gamma \ni \by \mapsto u(\cdot; \by) \in \mcH(\mcD(\by);\mcD_1(\by), \mcD_2(\by), \mcD_3(\by)) \) is a strong solution for the NPBE on the random domain if for each \(\by \in \Gamma\) we have \(u(\cdot;\by)\) satisfies
	\begin{align}
		-\nabla_\bx \cdot (\epsilon_k(\bx; \by) \nabla_\bx u_k(\bx; \by)) + \overline{\kappa}^2_k(\bx;\by) \sinh(u_k(\bx; \by)) &= f_k(\bx;\by), &  \text{for } k &= 1,2,3 \\
		[\mcB_k u(\cdot;\by)]_{\mcI_k(\by)} &= 0 & \text{for } k&=1,2 \\
		\gamma_3^+(u_3(\cdot;\by)) &= g(\cdot;\by).
	\end{align}
\end{prob}

From the strong form, we can derive a weak version of the NPBE by integrating against a test function and using integration by parts. Here we define \(w(\cdot;\by) \in H^2(\mcD(\by))\) to be the inverse trace of \(g(\cdot;\by)\) so that \(\gamma_3^+(w) = g\).

\begin{prob}\label{weak-npbe-rand-domain}
	The function \(\Gamma \ni \by \mapsto u(\cdot ;\by) = \tilde u(\cdot;\by) + w(\cdot;\by)\) is a weak solution for the NPBE on the random domain if for each \(\by\in \Gamma\) we have \(\tilde u (\cdot;\by) \in H^1_0(\mcD(\by))\) and 
	\begin{multline}
		\int_{\mcD(\by)} \epsilon(\bx;\by) \nabla_\bx \tilde u(\bx;\by) \cdot \nabla_\bx v(\bx) + \overline{\kappa}^2(\bx;\by) \sinh(\tilde u(\bx;\by) + w(\bx;\by)) \, d\bx \\
		= \int_U f(\bx;\by) v(\bx) \, d\bx - \int_{\mcD(\by)} \epsilon(\bx;\by) \nabla_\bx w(\bx;\by) \cdot \nabla_\bx v(\bx)\, d\bx , \quad \forall v \in H^1_0(\mcD(\by)).
	\end{multline}
\end{prob}

The weak form of the NPBE given above agrees with the standard definition of the weak form for this equation (c.f.\ \cite[\S 2.1.5]{holst1994poisson}). To pull back onto the reference domain, we construct an equivalent weak form of the NPBE for the pullback of \(u\) onto \(U\) given by \(u^*(\cdot;\by) = u(F(\cdot;\by);\by)\). Following results given in \cite{castrillon2016analytic,castrillon2021hybrid,castrillon2021stochastic}, we can write the weak form of the pullback onto the reference domain. 

\begin{prob}\label{weak-npbe-ref-domain}
	The function \(\Gamma \ni \by \mapsto u^*(\cdot ;\by) = \tilde u^*(\cdot;\by) + w^*(\cdot;\by)\) is a weak solution for the NPBE on the reference domain if for each \(\by\in \Gamma\) we have \(\tilde u^* (\cdot;\by) \in H^1_0(U)\) and 
	\begin{multline}\label{weak-eqn-ref-dom}
		\int_{U} \epsilon^*(\br;\by)\Big(J^{-1}(\br;\by) J^{-\mathrm{T}}(\br;\by) \det J(\br;\by) \nabla_{\br}  \tilde u^*(\br;\by) \Big)\cdot \nabla_{\br} v(\br) \\
		+ (\overline{\kappa}^2)^*(\br;\by) \sinh(\tilde u^*(\br;\by) +  w^*(\br;\by)) v(\br) \det J(\br;\by)\, d\br \\
		= \int_{U} f^* (\br;\by) v(\br)\det J(\br;\by)\, d\br \\
		- \int_U  \epsilon^*(\br;\by) \Big(J^{-1}(\br;\by) J^{-\mathrm{T}}(\br;\by) \det J(\br;\by) \nabla_{\br}  w^*(\br;\by) \Big)\cdot\nabla_{\br} v(\br)\, d\br, \quad \forall v\in H^1_0(U).
	\end{multline}
\end{prob}

The strong form of the NPBE on the reference domain is defined in an analogous way to \cref{strong-linear-interface-problem}. This also corresponds to the weak problem given in \cref{weak-npbe-ref-domain} in that assuming sufficient regularity of the weak solution and integrating by parts gives the strong formulation.
\begin{prob}\label{strong-npbe-ref-domain}
	The function \(\Gamma \ni \by \mapsto u^*(\cdot; \by) \in \mcH(U; U_1, U_2, U_3) \) is a strong solution for the NPBE on the random domain if for each \(\by \in \Gamma\) we have \(u^*(\cdot;\by)\) satisfies
	\begin{equation*}
		\begin{aligned}[]
			[\mcL_k(\by))] \big(u^*_k(\cdot;\by)\big) + (\overline \kappa^2)^*_k(\cdot;\by) \sinh(u^*_k(\cdot;\by)) \det J_k(\cdot;\by) &= f_k^*(\cdot;\by), & &\text{for } k=1,2,3, \\
			[\mcB_k u^* ]_{I_k} &= 0, & &\text{for } k =1,2, \\
			\gamma_3^+ u_3^* &= g^*,
		\end{aligned}
	\end{equation*}
	where 
	\begin{equation}\label{linear-operator}
		[\mcL_k(\by)] \big(v\big)  := -\nabla_{\br} \cdot \left( \epsilon_k^*(\cdot;\by) J_k^{-1}(\cdot;\by)J_k^{-\mathrm T}(\cdot;\by) \det J_k(\cdot;\by) \nabla_{\br} v \right), \quad \text{for } v \in H^2(U_k).
	\end{equation}
\end{prob}

To summarize, we have four problems that we can consider for the NPBE depending on whether we want the weak or strong solution and whether the domain is random or fixed. We can move from the the random domain problems to the reference domain problems by using the pullback \(F^*\). Transitioning between weak and strong forms is done by integrating-by-parts or having increased regularity of the solution. \Cref{diagram-of-problems} illustrates the relationship between these problems. We will be working with the reference domain moving forward: first showing that a weak solution exists, and then applying that result for the strong solution. We can recapture results on the random domain by composing solutions on the reference domain with the diffeomorphism \(F\).

\begin{figure}[h]
	\centering

    \begin{tikzpicture}[every label/.append style={text width=1.5cm,align=center},multilayer=3d]

        \SetLayerDistance{-2.75}          
        \begin{Layer}[layer=1]

            \Plane[x=-.5,y=-0.75,width=2.75,height=3.75]
            \node at (-.5,-.5)[below right]{$\mcD(\by)$};
            \node at (1.1,0.05)[align=center]{\small Int-by\\ \small  parts};
            \node at (1,2)[align=center]{\small Reg. results};

        \end{Layer}
        \begin{Layer}[layer=2]
            \Plane[x=-.5,y=-0.75,width=2.75,height=3.75,color=asparagus]
            \node at (-.5,-.5)[below right]{$U$};
            \node at (1,0.15)[align=center]{\small Int-by\\ \small  parts};
            \node at (1,2.05)[align=center]{\small Reg. results};
        \end{Layer}
        
        \Vertices{Network/vertices.csv}
        \Edges[Direct]{Network/edges.csv}
	\end{tikzpicture}
	\caption{\label{diagram-of-problems} Moving from the random domain to the reference domain is done by using the pullback \(F^*(\by)\). Using integration-by-parts with tests functions allows us to get the weak formulation of the NPBE from the strong formulation. If the solution of the weak problem is sufficiently regular, then it will also be a strong solution.}
\end{figure}
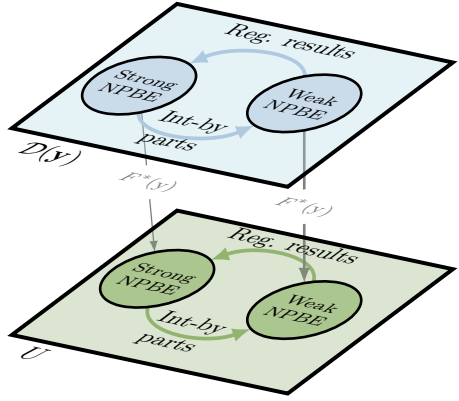

To get solutions for the NPBE, we must make some assumptions on the parameters. The assumptions that \(\epsilon^*\) and \((\overline{\kappa}^2)^*\) are positive and non-negative, respectively, come from the physics of the simulation and are generally satisfied. The function \(f^*\) is used to model the point charges, and ideally would be made to be a sum of Dirac deltas. However, there are few results for the nonlinear version of the Poisson-Boltzmann equation with forcing functions in \(H^{-2}(U)\). Thus in practice, one approximates the point charges with \(L^2\) functions (e.g.\ Gaussian functions centered at the location of the charge), and so we take \(f^*\in L^2\). The Dirichlet boundary condition is typically taken to be the long-distance approximation of the potential from the point charges and is smooth on \(\partial U=I_3\), but simply requiring \(g^*\in H^{3/2}(I_3)\) gives sufficient regularity. Finally, we assume all the parameters vary analytically with respect to \(\by\in \Gamma\), which is reasonable to assume when computing numerical solutions.

Thus we make the following assumptions:

\begin{asum}\label{asum1}
	For \(\by \in \Gamma\), we have that 
	\begin{itemize}
		\item \(\epsilon^*_k(\cdot;\by) \in C^{0,1}(\overline{U_k})\) for \(k=1,2,3\)
		\item \((\overline\kappa^2)^*(\cdot;\by)\in L^\infty(U)\)
		\item \(f^*(\cdot;\by) \in L^2(U)\) 
		\item \(g^*(\cdot;\by)\in H^{3/2}(I_3)\)
		\item \(F(\cdot;\by) \in C^2_{\mathrm{diff}}(\bbR^3,\bbR^3) \subset C^2(\bbR^3,\bbR^3)\) 
	\end{itemize}
	and the maps from \(\Gamma\) into the respective Banach spaces are analytic. Since the inverse trace operator is linear, we also have \(\by \mapsto w^*(\cdot;\by) \in H^2(U)\) is analytic.
\end{asum}
\begin{asum}\label{asum2}
	There exists \(c_1>0\) such that for any \(\by \in \Gamma\) 
	\begin{equation*}
		\epsilon^*(\br;\by)\geq c_1, \quad \forall \br \in U.
	\end{equation*}
\end{asum}
\begin{asum}\label{asum3}
	For any \(\by \in \Gamma\), we have that 
	\begin{equation*}
		(\overline\kappa ^2)^*(\br ;\by)\geq 0, \quad \text{for a.e.\ } \br \in U.
	\end{equation*}
\end{asum}

We will also need to assume that \(\det J(\br;\by)\) is bounded away from \(0\) for \(\by \in \Gamma\) and \(\br \in U\). This is essentially assuming that the map \(F\) is non-singular and preserves the orientation of the domain, both of which are reasonable when considering small perturbations of the interface.
\begin{asum}\label{asum4}
	There exists \(c_2>0\) such that for any \(\by \in \Gamma\) we have 
	\begin{equation*}
		\det J(\br;\by) \geq c_2, \quad \forall \br \in U.
	\end{equation*}
\end{asum}

Hence, we can prove the existence and uniqueness of weak solutions.
\begin{prop}\label{unique-weak-solns}
	If \cref{asum1,asum2,asum3,asum4} hold, then  \cref{weak-npbe-ref-domain} has a unique solution \(\by \mapsto u^*(\cdot, \by)\). Furthermore, \(\sinh(u^*(\cdot;\by))\) is in \(L^2(U)\) for each \(\by \in \Gamma\).
\end{prop}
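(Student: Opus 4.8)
The plan is to follow the variational strategy used for the deterministic NPBE in \cite{holst1994poisson} and for its stochastic variant in \cite{Heitzinger2018}, but applied to the pulled-back weak form \cref{weak-eqn-ref-dom}; the two points requiring care are that the pulled-back diffusion matrix $\epsilon_k^*\,J_k^{-1}J_k^{-\mathrm T}\det J_k$ is still uniformly elliptic despite the jumps of $\epsilon^*$ across the $I_k$, and the passage from a truncated, genuinely variational problem to \cref{weak-npbe-ref-domain} itself via a data-dependent $L^\infty$ bound. I would begin by recording the ellipticity: since by \cref{asum1} the map $\by\mapsto F(\cdot;\by)$ is continuous into $C^2_{\mathrm{diff}}(\bbR^3,\bbR^3)$ and $\Gamma$ is compact, $\|J(\cdot;\by)\|_{L^\infty(U)}$ and $\|J^{-1}(\cdot;\by)\|_{L^\infty(U)}$ are bounded uniformly in $\by$; together with \cref{asum2} ($\epsilon^*\ge c_1$) and \cref{asum4} ($\det J\ge c_2$) this produces constants $0<\lambda\le\Lambda<\infty$, independent of $\by$, with
\[
\lambda|\xi|^2\le\epsilon^*(\br;\by)\,J^{-1}(\br;\by)J^{-\mathrm T}(\br;\by)\det J(\br;\by)\,\xi\cdot\xi\le\Lambda|\xi|^2,\qquad\text{a.e.\ }\br\in U,\ \xi\in\bbR^3.
\]
Hence $a_\by(v,v):=\int_U\epsilon^*\big(J^{-1}J^{-\mathrm T}\det J\,\nabla_\br v\big)\cdot\nabla_\br v\,\mathrm d\br$ is bounded and coercive on $H^1_0(U)$, exactly as in the proof of \cref{existence-uniqueness-linear-weak-solns}, while $(\overline\kappa^2)^*\det J\ge0$ a.e.\ by \cref{asum3,asum4}. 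I also note that, $U\subset\bbR^3$ having $C^{1,1}$ boundary, $w^*(\cdot;\by)\in H^2(U)\hookrightarrow L^\infty(U)$.

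The nonlinear functional $v\mapsto\int_U(\overline\kappa^2)^*\sinh(v+w^*)\,(\cdot)\det J$ is not defined on all of $H^1_0(U)$ — in three dimensions $H^1\hookrightarrow L^6$ only, and $\sinh$ of an $L^6$ function need not be integrable, which is obstruction~(1) of the introduction. To get around this I would fix $M>0$, let $S_M\colon\bbR\to\bbR$ be the truncation of $\sinh$ that agrees with it on $[-M,M]$ and is constant outside, and let $\Psi_M\ge0$ be the antiderivative of $S_M$ vanishing at $0$. The energy
\[
E_M(v)=\tfrac12\,a_\by(v+w^*,v+w^*)+\int_U(\overline\kappa^2)^*\Psi_M(v+w^*)\det J\,\mathrm d\br-\int_U f^*\,(v+w^*)\det J\,\mathrm d\br
\]
is then finite on all of $H^1_0(U)$ (as $S_M$ is bounded, $(\overline\kappa^2)^*\in L^\infty$, $f^*\in L^2$, $|U|<\infty$), continuous, strictly convex ($a_\by$ positive definite and $\Psi_M$ convex since $S_M$ is nondecreasing), and coercive (by the ellipticity above and Poincar\'e). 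The direct method therefore yields a unique minimizer $\tilde u_M\in H^1_0(U)$, and its Euler--Lagrange equation is exactly \cref{weak-eqn-ref-dom} with $\sinh$ replaced by $S_M$; equivalently $u_M:=\tilde u_M+w^*$ solves the truncated version of the strong problem \cref{strong-npbe-ref-domain}.

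The crucial step is a bound $\|u_M\|_{L^\infty(U)}\le M_0$ with $M_0$ depending only on $\lambda$, $\|f^*\|_{L^2(U)}$, $\|w^*\|_{L^\infty(U)}$ and $|U|$, and \emph{not} on $M$. I would derive it by Stampacchia's truncation method: for $k\ge\|w^*\|_{L^\infty(U)}$ the function $(u_M-k)^+$ lies in $H^1_0(U)$, and testing the equation for $u_M$ against it makes the nonlinear contribution $\int_U(\overline\kappa^2)^*S_M(u_M)(u_M-k)^+\det J\,\mathrm d\br$ nonnegative (on $\{u_M>k\}$ one has $S_M(u_M)\ge S_M(k)\ge0$, and the weight is $\ge0$), so it may be discarded, leaving the purely linear inequality $a_\by\big((u_M-k)^+,(u_M-k)^+\big)\le\int_U f^*(u_M-k)^+\det J\,\mathrm d\br$ with measurable, uniformly elliptic coefficients — the jumps of $\epsilon^*$ being irrelevant to this argument, which is obstruction~(2). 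The standard iteration over the level sets $\{u_M>k\}$ then gives $\esssup_U u_M\le M_0$, and symmetrically a lower bound. For $M>M_0$ we have $S_M(u_M)=\sinh(u_M)$, so $u^*(\cdot;\by):=u_M$ is a weak solution of \cref{weak-npbe-ref-domain}; and since $\|u^*(\cdot;\by)\|_{L^\infty(U)}\le M_0$ it follows that $\sinh(u^*(\cdot;\by))\in L^\infty(U)\subset L^2(U)$, which is the last assertion.

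For uniqueness, I would observe that any weak solution of \cref{weak-npbe-ref-domain} is itself bounded by running the Stampacchia argument directly on it (all integrals involved being finite by the definition of a weak solution); so if $u^{(1)},u^{(2)}$ are two weak solutions they both lie in $L^\infty(U)$, subtracting their equations and testing with $u^{(1)}-u^{(2)}\in H^1_0(U)$ is legitimate, and yields
\[
a_\by\big(u^{(1)}-u^{(2)},u^{(1)}-u^{(2)}\big)+\int_U(\overline\kappa^2)^*\big(\sinh u^{(1)}-\sinh u^{(2)}\big)\big(u^{(1)}-u^{(2)}\big)\det J\,\mathrm d\br=0.
\]
Both terms are nonnegative — the second because $\sinh$ is nondecreasing and $(\overline\kappa^2)^*\det J\ge0$ — so coercivity of $a_\by$ forces $u^{(1)}=u^{(2)}$; since all of the bounds above are uniform in $\by\in\Gamma$, this produces a well-defined map $\by\mapsto u^*(\cdot;\by)$. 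The step I expect to be the real obstacle is the $M$-independent $L^\infty$ estimate: one must check that the De Giorgi--Stampacchia iteration survives the anisotropic factor $J^{-1}J^{-\mathrm T}\det J$ and the coefficient jumps, and that the truncated nonlinearity genuinely contributes with the favourable sign after testing; the remaining ingredients are the routine convexity and Lax--Milgram-type arguments already present in \cref{existence-uniqueness-linear-weak-solns}.
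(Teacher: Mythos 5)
Your proposal is correct and is, in effect, a fully spelled-out version of what the paper's one-paragraph proof compresses into ``the proof of \cite[Thm.~2.14]{holst1994poisson} can easily be adjusted to the case where \(\overline{\mathbf a}=J^{-1}J^{-\mathrm T}\det J\) is symmetric positive definite.'' Both the paper (via Holst) and you follow the same three-stage variational strategy: replace \(\sinh\) by a bounded monotone truncation so the energy is defined on all of \(H^1_0(U)\), minimize a convex coercive functional, and remove the truncation by an \(M\)-independent \(L^\infty\) bound; uniqueness is then monotonicity plus coercivity. The only genuine difference is how the \(L^\infty\) bound is produced: Holst's original argument is built from barrier comparison with sub- and supersolutions of an associated linear problem, whereas you run a De Giorgi--Stampacchia level-set iteration. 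Both work with merely measurable uniformly elliptic coefficients, so both survive the jump of \(\epsilon^*\) across the interfaces and the anisotropy of \(J^{-1}J^{-\mathrm T}\det J\); Stampacchia is perhaps the cleaner route to a bound whose constant depends only on \(\lambda\), \(\|f^*\|_{L^2}\), \(\|w^*\|_{L^\infty}\), \(|U|\) (which is what you want for uniformity in \(\by\in\Gamma\)), while Holst's barriers are more explicit and directly supply \(\sinh(u^*)\in L^\infty\) as well. Two small points worth keeping in mind if you polish this up: when testing with \((u_M-k)^+\) you need \(k\) large enough that this lies in \(H^1_0(U)\), which in this formulation means \(k\ge\|g^*\|_{L^\infty(\partial U)}\) (your stronger condition \(k\ge\|w^*\|_{L^\infty(U)}\) certainly suffices); and in the uniqueness argument you are right that the definition of a weak solution already gives \(\sinh(u^*)\,(u^*-k)^+\in L^1\) so the Stampacchia step is legitimate, but it is worth stating that the resulting \(L^\infty\) bound is what makes the final pairing \(\int(\overline\kappa^2)^*(\sinh u^{(1)}-\sinh u^{(2)})(u^{(1)}-u^{(2)})\det J\) well-defined without further justification.
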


\begin{proof}
	By \cref{asum4}, we have that \(\det J \geq c_2 > 0\) and so the matrix
	\begin{equation*}
		J^{-1}(\br;\by) J^{-\mathrm{T}}(\br;\by) \det J(\br;\by)
	\end{equation*}
	is symmetric positive definite for each \(\br \in U\) and \(\by \in [-1,1]^N\). The proof of weak solutions to the NPBE given in \cite[Thm.~2.14]{holst1994poisson} can easily be adjusted to the case where the matrix \(\overline{\mathbf a}:=J^{-1}(\br;\by) J^{-\mathrm{T}}(\br;\by) \det J(\br;\by)\) 
    is symmetric positive definite. That argument gives unique solutions to \cref{weak-npbe-ref-domain}. The fact that \(\sinh(u^*(\cdot;\by))\) is an \(L^2\) function also follows from the proof in \cite{holst1994poisson}.
\end{proof}

\begin{rem}
	The same assumptions also imply that there is a unique solution for \cref{weak-npbe-rand-domain} which also satisfies \(\sinh(u(\cdot;\by)) \in L^2(\mcD(\by))\) for each \(\by \in \Gamma\). 
\end{rem}

    \begin{rem}
        The next result follows from the analytic version of the Implicit Function Theorem on Banach spaces. The statement of this theorem is analogous to the finite-dimensional version, and an exact statement of it can be found in \cite{whittlesey1965analytic}. One key change from the finite-dimensional version to the infinite-dimensional version is that we now use Fr\'{e}chet derivatives and require that the derivative is an isomorphism between Banach spaces.
    \end{rem}

\begin{theo}\label{existence-of-region}
	Suppose \cref{asum1,asum2,asum3,asum4} hold. Then there exists a unique solution to \cref{strong-npbe-ref-domain}. Furthermore, there exists a complex neighborhood of \(\Gamma\) given by \(\mathcal N \subset \bbC^N\) such that there is a function
	\begin{equation*}
		\by \mapsto u^*(\cdot, \by) \in \mcH(U), \quad \text{for } \by \in \mathcal N
	\end{equation*}
	where
	\begin{enumerate}[label = (\roman*)]
		\item the map is analytic from \(\mathcal N\) into \(\mcH(U)\), and 
		\item the map agrees with the strong solution on \(\Gamma\).
	\end{enumerate}
\end{theo}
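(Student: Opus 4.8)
The plan is to write Problem~\ref{strong-npbe-ref-domain} as the zero set of an analytic map between Banach spaces and apply the analytic implicit function theorem, with Theorem~\ref{linear-regularity-weak-soln} supplying the crucial fact that the relevant partial Fr\'echet derivative is an isomorphism. The ``real'' part of the statement --- existence and uniqueness of a strong solution on $\Gamma$ --- comes first: Proposition~\ref{unique-weak-solns} gives a unique weak solution $u^*(\cdot;\by)$ with $\sinh(u^*(\cdot;\by))\in L^2(U)$, so the nonlinear term $(\overline{\kappa}^2)^*\sinh(u^*)\det J$ may be absorbed into the right-hand side as an $L^2(U)$ forcing; since the remaining operator is linear elliptic with coefficient matrix $\epsilon^*J^{-1}J^{-\mathrm T}\det J$ (symmetric, and uniformly elliptic on $\Gamma$ because $J^{-1}J^{-\mathrm T}\det J$ is SPD and $\epsilon^*\geq c_1$) and $c\equiv0$, Theorem~\ref{linear-regularity-weak-soln} yields $u^*(\cdot;\by)\in\mcH(U)$ together with the interface and boundary conditions, i.e.\ $u^*(\cdot;\by)$ solves Problem~\ref{strong-npbe-ref-domain}, and uniqueness follows because a strong solution is a weak solution.

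For the analytic extension, set $Z:=L^2(U)\times H^{1/2}(I_1)\times H^{1/2}(I_2)\times H^{3/2}(I_3)$ and define
\[
  \mathcal F(\by,u):=\Big(\{\mcL_k(\by)(u_k)+(\overline{\kappa}^2)^*_k(\by)\sinh(u_k)\det J_k(\by)-f^*_k(\by)\}_{k=1}^{3},\ \{[\mcB_k u]_{I_k}\}_{k=1}^{2},\ \gamma_3^+u_3-g^*(\by)\Big),
\]
so that $\mathcal F(\by,u)=0$ is exactly Problem~\ref{strong-npbe-ref-domain}. By compactness of $\Gamma$ and Assumption~\ref{asum1}, all the parameters extend holomorphically to a common complex neighborhood $\mathcal N_0\subset\bbC^N$ of $\Gamma$, which (shrinking $\mathcal N_0$) also keeps $|\det J(\br;\by)|\geq c_2/2$ and $|\epsilon^*(\br;\by)|\geq c_1/2$ uniformly in $\br\in\overline U$, so $J^{-1}(\cdot;\by)$ stays holomorphic in $\by$. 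I would then verify $\mathcal F$ is well defined and jointly analytic from $\mathcal N_0\times\mcH(U)_\bbC$ into $Z_\bbC$ from three ingredients: (a) $\by\mapsto\epsilon^*_k(\by)J_k^{-1}(\by)J_k^{-\mathrm T}(\by)\det J_k(\by)$ is analytic into $C^{0,1}(\overline{U_k})^{3\times3}$, since $F(\cdot;\by)$ is analytic and matrix inversion is analytic on invertible elements; (b) the superposition operator $u\mapsto\sinh(u)$ is analytic from $\mcH(U)_\bbC$ into $L^2(U)_\bbC$, because $H^2(U_k)\hookrightarrow C^0(\overline{U_k})$ in three dimensions and $\sinh$ is entire, so expanding $\sinh$ in its globally convergent Taylor series exhibits $u\mapsto\sinh(u)$ as a locally uniformly convergent power series of bounded homogeneous polynomials on $C^0$; (c) sums, products, and compositions of analytic maps are analytic, and the trace and conormal-derivative maps are bounded and linear in $u$ with coefficients analytic in $\by$. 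I expect step~(b) --- already the well-definedness of the nonlinearity, and then its analyticity --- to be the \emph{main obstacle}; this is precisely the ``Nonlinearity'' difficulty noted in the introduction, and the reason one must work in $\mcH(U)$ rather than $H^1(U)$.

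Now fix $\by_0\in\Gamma$ and let $u_0^*:=u^*(\cdot;\by_0)\in\mcH(U)$ be the real strong solution, so $\mathcal F(\by_0,u_0^*)=0$. Differentiating in $u$,
\[
  D_u\mathcal F(\by_0,u_0^*)[h]=\Big(\{\mcL_k(\by_0)(h_k)+(\overline{\kappa}^2)^*_k(\by_0)\cosh\big((u_0^*)_k\big)\det J_k(\by_0)\,h_k\}_{k=1}^{3},\ \{[\mcB_k h]_{I_k}\}_{k=1}^{2},\ \gamma_3^+h_3\Big),
\]
which is precisely the linear interface operator of Problem~\ref{strong-linear-interface-problem} with the elliptic coefficients above and zeroth-order coefficient $c=(\overline{\kappa}^2)^*\cosh(u_0^*)\det J$. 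The point is that $c\in L^\infty(U)$ (as $u_0^*|_{U_k}\in H^2(U_k)\hookrightarrow L^\infty(U_k)$) and $c\geq0$ a.e.\ (since $\cosh\geq1$, $(\overline{\kappa}^2)^*\geq0$ by Assumption~\ref{asum3}, and $\det J\geq c_2$ by Assumption~\ref{asum4}); this nonnegativity --- the structural feature that $\sinh'=\cosh>0$ --- is what lets Theorem~\ref{linear-regularity-weak-soln} apply and conclude that $D_u\mathcal F(\by_0,u_0^*)$ is an isomorphism $\mcH(U)\to Z$, hence so is its complexification. The analytic implicit function theorem on Banach spaces (e.g.\ \cite{whittlesey1965analytic}) then gives, for each $\by_0\in\Gamma$, a connected complex neighborhood $\mathcal N_{\by_0}$ and a unique analytic $\by\mapsto\hat u(\by)\in\mcH(U)_\bbC$ with $\hat u(\by_0)=u_0^*$ and $\mathcal F(\by,\hat u(\by))=0$; a conjugation-plus-uniqueness argument (at real $\by$, $\mathcal F$ commutes with complex conjugation) shows $\hat u(\by)$ is real for real $\by$, so it coincides there with the strong solution of Problem~\ref{strong-npbe-ref-domain}. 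Since the norm of $D_u\mathcal F^{-1}$ and the local analyticity data are bounded uniformly over the compact set $\Gamma$, the $\mathcal N_{\by_0}$ can be taken of uniform radius; their union $\mathcal N$ is a connected complex neighborhood of $\Gamma=[-1,1]^N$ on which the local solutions agree on overlaps (each overlap of two such balls meets $\Gamma$ by convexity, and there both local solutions equal the real strong solution, so the identity theorem forces agreement), and patching produces the analytic map $\by\mapsto u^*(\cdot;\by)\in\mcH(U)$ on $\mathcal N$ agreeing with the strong solution on $\Gamma$, which is the assertion.
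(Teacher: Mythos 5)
Your proof is correct and follows the same strategy as the paper: obtain the real strong solution from \cref{unique-weak-solns} and \cref{linear-regularity-weak-soln}, encode \cref{strong-npbe-ref-domain} as $\mcF(\by,u)=0$, check that $D_u\mcF$ is the linear interface operator with zeroth-order coefficient $c=(\overline{\kappa}^2)^*\cosh(u_0^*)\det J\geq 0$ so \cref{linear-regularity-weak-soln} makes it an isomorphism, and invoke the analytic implicit function theorem. You supply several details the paper leaves implicit and which genuinely need saying --- the analyticity of the superposition operator $u\mapsto\sinh(u)$ on $\mcH(U)$ via the $H^2\hookrightarrow C^0$ embedding, the verification that the coefficient $c$ is in $L^\infty$ and nonnegative, the complex-conjugation argument ensuring the holomorphic branch is real on $\Gamma$, and the patching of local neighborhoods using convexity of $\Gamma$ and the identity theorem --- and you correctly cite \cref{linear-regularity-weak-soln} rather than \cref{existence-uniqueness-linear-weak-solns} for the isomorphism, fixing a small slip in the paper's own proof.
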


\begin{proof}
	From \cref{unique-weak-solns}, there are unique weak solutions \(u^*(\cdot,\by) \in H^1(U)\) for each \(\by \in [-1,1]^N\) and that \(\sinh(u^*(\cdot;\by))\in L^2(U)\). Setting
	\begin{equation*}
		\tilde f(\br;\by) = f^*(\br;\by) \det J(\br;\by)- (\overline{\kappa}^2)^*(\br;\by) \sinh(\tilde u^*(\br;\by) +  w^*(\br;\by))  \det J(\br;\by),
	\end{equation*}
	we get that \(\tilde f(\cdot;\by) \in L^2(U)\) and \cref{weak-eqn-ref-dom} can be written as 
	\begin{multline}
		\int_{U} \epsilon^*(\br;\by)\Big(J^{-1}(\br;\by) J^{-\mathrm{T}}(\br;\by) \det J(\br;\by) \nabla_{\br}  \tilde u^*(\br;\by) \Big)\cdot \nabla_{\br} v(\br) \\
		= \int_{U} \tilde f(\br;\by) v(\br), d\br - \int_U  \epsilon^*(\br;\by) \Big(J^{-1}(\br;\by) J^{-\mathrm{T}}(\br;\by) \det J(\br;\by) \nabla_{\br}  w^*(\br;\by) \Big)\cdot\nabla_{\br} v(\br)\, d\br, \quad \forall v\in H^1_0(U),
	\end{multline}
	which is in the same form as \cref{weak-eqn}. Then applying \cref{linear-regularity-weak-soln} gives that \(u^*(\cdot;\by) \in \mcH(U)\). To show analyticity, we apply the analytic version of the implicit function theorem. We define a mapping 
	\begin{equation}\label{mcF-function}
		\mcF: \Gamma \times \mcH(U;U_1,U_2) \to L^2(U) \times H^{1/2}(I_1) \times H^{1/2}(I_2) \times H^{3/2}(I_3) =: Z
	\end{equation} such that \(\mcF(\by, u^*) = (0,0,0,0)\) if and only if \(u^*\) is a strong solution on the reference domain for that fixed \(\by\). The first component of \(\mcF\) is defined on each \(U_k\) by 
	\begin{equation*}
		[\mcL_k(\by)] (u^*_k) + (\overline \kappa^2)^*_k(\cdot;\by) \sinh(u^*_k) \det J_k(\cdot;\by) - f_k^*(\cdot;\by) \det J_k(\cdot;\by)
	\end{equation*}
	for \(k=1,2,3\). This defines \(L^2\) function on each \(U_k\) (since \(\sinh(u^*_k) \in H^2(U_k)\)) and thus can be used to define an \(L^2\) function on all of \(U\). The second and third component of \(\mcF(\by, u^*)\) is defined to be \([\mcB_k u^* ]_{I_k}\) for \(k=1\) and \(k=2\), respectively. The final component of \(\mcF(\by, u^*)\) is given by \(\gamma_3^+ u_3^* - g^*\). The second and third components define linear maps, and the first and fourth component are easily verified to be analytic. Thus \(\mcF\) is an analytic map between Banach spaces. Fix \(\by_0\in \Gamma\). Then since \(u^*(\cdot;\by)\) is a strong solution we have 
	\begin{equation}\label{implicit-equation}
		\mcF(\by_0, u^*(\cdot;\by_0)) = (0, 0, 0, 0).
	\end{equation}
	We want to apply the implicit function theorem to \cref{implicit-equation} to get \(u^*\) as an analytic function of \(\by\) in a neighborhood around \(\by_0\). To do this, we must check that the derivative of \(\mcF\) with respect to \(u^*\) at \((\by_0, u^*(\cdot;\by_0))\) is an isomorphism between \(\mcH(U)\) and \(Z\). (\textcolor{black}{To apply the Implicit Function Theorem to \cref{implicit-equation} to get \(u^*\) as an analytic function of \(\by\) in a neighborhood around \(\by_0\), it must checked that the derivative of \(\mcF\) with respect to \(u^*\) at \((\by_0, u^*(\cdot;\by_0))\) is an isomorphism between \(\mcH(U)\) and \(Z\).}) One can compute that 
	\begin{equation*}
		[D_{u^*} \mcF(\by_0, u^*(\cdot;\by_0)](v) = ([\mcL(\by_0)](v) +  (\overline \kappa^2)^*(\cdot;\by_0) \cosh(u^*(\cdot;\by_0)) \det J(\cdot;\by_0) v, [\mcB_1 v ]_{I_1} , [\mcB_2 v ]_{I_2}, \gamma_3^+ v_3),
	\end{equation*}
	where \(\mcL\) is defined locally in \cref{linear-operator}. This linear operator is of the same form of \cref{strong-linear-interface-problem}, and so \cref{existence-uniqueness-linear-weak-solns} implies that \(D_{u^*} \mcF(\by_0, u^*(\cdot;\by_0))\) is in fact an isomorphism. Therefore the map \(\by \mapsto u^*(\cdot;\by)\) is analytic in a neighborhood of \(\by_0\in\bbR^N\). This map can be extended to a complex neighborhood of \(\by_0\in \bbC^N\). Applying this argument to every point in \(\Gamma\) gives the complex neighborhood \(\mathcal N\) for which \(\mathcal N \ni \by \mapsto u^*(\cdot;\by)\) is analytic.
\end{proof}

\subsection{Estimates on the Region of Analyticity}

\Cref{existence-of-region} shows that there exists a region of analyticity for the strong solution of the NPBE. This will be used to prove convergence results relating to the quantity of interest by using
sparse grids  \cite{nobile2008a}. \textcolor{black}{In this section, a quantitative bound on the size of the region of analyticity is derived, which is applied to obtain the aforementioned convergence rates; see \Cref{errorestimates:figure1} and the discussion that follows.} However, the rate of convergence we depend on the  size of the region of analyticity.

The typical application of the implicit function theorem does not give \emph{a priori} bounds on the size of the region of analyticity. For the finite-dimensional case, an application of Rouch\'e's theorem can give estimates of this region. The results in \cite{chang2003analytic} give simple bounds on the radius of the region of analyticity. We will show a similar result holds for Banach spaces.

\begin{theo}\label{theorem-analytic-ift-region}
	Let \(\mcF : \bbC^n \times X \to Y\) be an analytic function with \(X\) and \(Y\) Banach spaces. Suppose that \(\mcF(0,0) = 0\) and \(D_x \mcF(0,0): X \to Y\) is an isomorphism. Let \(\|D_x\mcF(0,0)^{-1} \|_{\mcL(Y,X)} \leq a\) and suppose that \(\|\mcF(z,x)\|_Y \leq M\) on B where \(B = \{(z,x): |z|, \|x\|_X \leq R\}\). Also, suppose that \(D_x \mcF(z,x)^{-1}\) exists and is a bounded operator for each \((z,x)\in B\). Then the analytic function \(z \mapsto x(z)\) is defined in a region containing the ball
	\begin{equation*}
		|z| <\Theta(M,a,R;\mcF) := \frac{\Big(aMR - \sqrt{aMR^2(aM+R)}\Big)\Big(aMR + R^2 - \sqrt{aMR^2(aM+R)}\Big)}{2a^2M^2R - R\sqrt{aMR^2(aM+R)} + aM \Big(2R^2 - 3 \sqrt{aMR^2(aM+R)}\Big)}
	\end{equation*}
	where 
	\begin{equation*}
		\| x(z) \|_X < \Xi (M, a, R;\mcF) :=  \frac{aMR+ R^2 - \sqrt{a^2M^2R^2 + aMR^3}}{aM + R} \quad \text{ for } |z|<\Theta.
	\end{equation*}
\end{theo}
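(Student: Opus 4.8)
\emph{Proof strategy.} The plan is to carry over to the Banach setting the finite-dimensional bounds of \cite{chang2003analytic} (where, as noted above, Rouch\'e's theorem does the job): there one tracks the root of $\mcF(z,\cdot)$ near $x=0$ as $z$ varies by an argument-principle count, and the size of the region is dictated by requiring that $\mcF(z,\cdot)$ not vanish on a sphere of radius $\rho$ as long as $|z|<\delta$. Since Rouch\'e is unavailable in infinite dimensions, I would replace it by a uniform contraction principle together with analytic dependence on $z$. Set $A:=D_x\mcF(0,0)$, so $\|A^{-1}\|_{\mcL(Y,X)}\le a$, and for $z\in\bbC^n$ define the simplified-Newton map
\begin{equation*}
	T_z(x):=x-A^{-1}\mcF(z,x),
\end{equation*}
so that $x$ solves $\mcF(z,x)=0$ iff $x=T_z(x)$, and $T_0(0)=0$, $D_xT_0(0)=\mathrm{Id}-A^{-1}A=0$. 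The map $(z,x)\mapsto T_z(x)$ is holomorphic on $B$, and the point of keeping the fixed derivative $A^{-1}$ rather than $D_x\mcF(z,x)^{-1}$ is that only the single constant $a$ enters the estimates.

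First I would record two quantitative bounds, both obtained from the Cauchy inequalities for Banach-valued holomorphic functions applied to data bounded by $M$ on the polydisc $B$. (i) Since $z\mapsto\mcF(z,0)$ is holomorphic, vanishes at the origin, and is bounded by $M$ on $|z|\le R$, the Schwarz lemma gives $\|\mcF(z,0)\|_Y\le M|z|/R$, hence $\|T_z(0)\|_X\le aM|z|/R$. (ii) The operator-valued holomorphic map $(z,x)\mapsto D_xT_z(x)=\mathrm{Id}-A^{-1}D_x\mcF(z,x)$ vanishes at $(0,0)$ and, after a Cauchy estimate of $D_x\mcF$ in terms of $M$ on a slightly shrunken polydisc of auxiliary radius $\rho'<R$, is bounded there; a second Schwarz-type estimate then controls $\sup\{\|D_xT_z(x)\|_{\mcL(X)}:|z|\le\delta,\ \|x\|_X\le\rho\}$ by an explicit quantity that is linear in $\max(\delta/R,\rho/\rho')$. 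The hypothesis that $D_x\mcF(z,x)^{-1}$ exists and is bounded on all of $B$ is used to ensure that the branch the iteration converges to is the genuine solution branch through $(0,0)$ and that it stays interior, so no competing branch spoils uniqueness.

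Next, for a working radius $\rho$ in $x$ and $\delta$ in $z$ I would impose the two conditions that make $T_z$ a contraction of $\overline B_X(0,\rho)$ into itself for every $|z|\le\delta$: the self-map inequality $\|T_z(0)\|_X+\big(\sup_{\|x\|\le\rho}\|D_xT_z(x)\|\big)\rho\le\rho$ and the strict-contraction inequality $\sup_{\|x\|\le\rho}\|D_xT_z(x)\|<1$. Feeding in the bounds from the previous step turns these into explicit algebraic inequalities relating $\delta$, $\rho$, and $\rho'$; eliminating the auxiliary radius and then maximizing the admissible $\delta$ over $\rho$ is a one-variable calculus problem whose optimum is attained when $\rho$ equals the root in $(0,R)$ of $aM\,\rho(2R-\rho)=R(R-\rho)^2$, i.e. $\rho=\Xi(M,a,R;\mcF)$, with corresponding value $\delta=\Theta(M,a,R;\mcF)$; rationalizing yields the closed forms in the statement. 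Analyticity then comes for free: for each $|z|<\Theta$ the fixed point $x(z)$ is the uniform limit (on compact subsets of the ball) of the iterates $T_z^{\,k}(0)$, each holomorphic in $z$ and uniformly bounded by $\Xi$, so by the Banach-valued Weierstrass convergence theorem $z\mapsto x(z)$ is holomorphic with $\|x(z)\|_X<\Xi$, and it coincides near $0$ with the usual implicit-function solution by local uniqueness.

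I expect the main obstacle to be step (ii) together with the optimization: the Cauchy/Schwarz estimates for $D_x\mcF$ — choosing $\rho'$ and tracking constants without introducing spurious factors — must be sharp enough that the optimized self-map and contraction inequalities reproduce the \emph{exact} expressions for $\Theta$ and $\Xi$ rather than mere order-of-magnitude bounds. The reformulation, the contraction/Weierstrass machinery, and the bookkeeping with Banach-valued power series are otherwise routine.
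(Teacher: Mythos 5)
You take a genuinely different route from the paper: a simplified-Newton contraction on $T_z(x) = x - A^{-1}\mcF(z,x)$ with $A := D_x\mcF(0,0)$, rather than the paper's Rouch\'{e}-style comparison. The paper fixes a sphere $\|x\|_X = r$ and (a) bounds $\|\mcF(0,x)\|_Y$ from below there by expanding $\mcF(0,\cdot)$ in a power series about the origin and Cauchy-estimating the $k$-linear coefficients, obtaining $\|\mcF(0,x)\|_Y \ge r/a - Mr^2/(R^2-rR)$; (b) bounds the perturbation $\|\mcF(0,x)-\mcF(z,x)\|_Y \le M|z|/(R-|z|)$ via a Cauchy estimate on the $z$-series at fixed $x$; requiring (b) $<$ (a) guarantees no zero crosses the sphere, and optimizing over $r$ gives $\Xi$ and $\Theta$. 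The boundedness of $D_x\mcF(z,x)^{-1}$ on $B$ is then used to continue the IFT branch through the ball.

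The gap in your plan is in the constants. Your intermediate bounds --- the Schwarz estimate $\|T_z(0)\|_X \le aM|z|/R$ (valid because $\mcF(0,0)=0$) and a Cauchy estimate on $\sup\|D_xT_z(x)\|_{\mcL(X)}$ over a shrunken polydisc --- are algebraically different from (a)--(b): one bounds $\mcF(z,0)$ rather than $\mcF(z,x)-\mcF(0,x)$ on the sphere, and the contraction constant involves the derivative series rather than the power series of $\mcF(0,\cdot)$. When fed into the self-map inequality $\|T_z(0)\|_X + \big(\sup\|D_xT_z\|\big)\rho \le \rho$ and optimized over $\rho$, these do not reduce to the paper's inequality $M|z|/(R-|z|) < r/a - Mr^2/(R^2-rR)$, and hence do not produce the critical-point equation $aM\rho(2R-\rho)=R(R-\rho)^2$. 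You state that equation as the optimum, but it appears to be read off from the form of $\Xi$ rather than derived from your contraction bookkeeping; you yourself flag that the estimates ``must be sharp enough'' to match, and that is exactly the unresolved step. To close it you would either need to show your contraction accounting reproduces the paper's inequality, or simply adopt the paper's (a)--(b). A further, smaller mismatch: in a genuine contraction argument the boundedness of $D_x\mcF(z,x)^{-1}$ on $B$ is superfluous --- Banach's fixed-point theorem already gives existence, uniqueness, and holomorphic dependence inside the ball --- so attributing to it the role of ``selecting the genuine branch'' does not describe how it enters either argument; in the paper it is the engine that lets the IFT zero be continued without bifurcating or vanishing as $|z|$ grows.
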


\begin{proof}
	
	The proof follows in the spirit of arguments from degree theory. We find \(r > 0\) such that \(\mathcal F(z,x) \neq 0\) for \(z\neq 0\) sufficiently small and \(\|x\|_X = r\). This -- along with the assumption on the inverse of \(D_x\mcF\) -- guarantees the zero from the implicit function theorem does not leave the ball, bifurcate, or vanish. Obviously, \(z \mapsto x(z) \) is continuous and so if \(\|x(z_1)\|_X > r\) for some \(z_1\) then there must be some point \(z_0\) where \(\|x(z_0)\|_X = r\) and \(\mcF(z_0, x(z_0)) = 0\), which contradicts our assumed bound. By repeatedly applying the implicit function theorem, one can show that we cannot have the function lose analyticity if the function \(x(z)\) does not leave the ball of radius \(r\). Thus we can increase \(|z|\) up to the point where the zero can leave the ball of radius \(r\) and this becomes our estimate for the region of analyticity.
	
	We first find an \(r > 0\) such that \(\|\mcF(0, x)\|_{Y} > 0\) for all \(0< \|x\|_X \leq r\). Because \(\mcF\) is analytic, we can write \(\mcF(0,x)\) as a power series centered at \((0,0)\):
	\begin{equation}\label{eq:power-series}
		\mcF(0, x) = D_x \mcF(0,0) x + \sum_{k=2}^\infty a_k(\underbrace{x, \ldots, x}_{k \text{ times}})
	\end{equation}
	where \(a_k\) are \(k\)-linear maps. Using a Cauchy estimate we have that
	\begin{equation*}
		\|a_k(x,\ldots, x)\|_Y = M \left(\frac{\|x\|_X}{R}\right)^k.
	\end{equation*}
	Then rearranging \cref{eq:power-series}, applying \(D_x\mcF(0,0)^{-1}\) to both sides, and taking norms gives
	\begin{align*}
		\| x\|_X &= \| D_x\mcF(0,0)^{-1} [\mcF(0,x)  - \sum_{k=2}^\infty a_k(x,\cdots, x)] \|_X \\
		&\leq a \left(\|\mcF(0,x) \|_Y + \sum_{k=2}^\infty M \left(\frac{\|x\|_X}{R} \right)^k \right) \\
		&= a \left(\|\mcF(0,x) \|_Y + \frac{M\|x\|_X^2}{R^2 - \|x\|_X R}\right).
	\end{align*}
	Thus we have that 
	\begin{equation*}
		\|F(0, x) \|_Y \geq \frac{\|x\|_X} a -  \frac{M\|x\|_X^2}{R^2 - \|x\|_X R}.
	\end{equation*}
	To guarantee the right-hand side is strictly greater than zero we need that 
	\begin{equation*}
		0 < \|x\|_{X} < \frac{R^2}{R+ aM}.
	\end{equation*}
	Thus we can choose any \(r\) such that 
	\begin{equation*}
		0 < r  < \frac{R^2}{R+ aM}.
	\end{equation*}
	
	Now we want to find \(\theta> 0\) such that if \(|z| < \theta\) then \(\mcF(z, x) \neq 0\) when \(\|x\|_X = r\). It will be sufficient to find a \(\theta\) where for any \(|z| < \theta\) 
	\begin{equation}\label{rouche-ineq}
		\| \mcF(0,x) - \mcF(z, x)\|_{Y} < \|\mcF(0,x)\|_Y, \quad \text{ for } x \text{ where } \|x \| = r.
	\end{equation}
	By using a power series expansion around \((0,x)\) with respect to \(z\) and the Cauchy estimate we get that
	\begin{equation*}
		\| \mcF(0,x) - \mcF(z, x)\|_{Y} \leq \frac{M|z|}{R - |z|}.
	\end{equation*}
	Then \cref{rouche-ineq} holds if
	\begin{equation*}
		\frac{M|z|}{R - |z|} < \frac r a - \frac{Mr^2}{R^2 - rR},
	\end{equation*}
	which holds if 
	\begin{equation*}
		|z| < \frac{r R^3 - r^2R^2 - Mr^2 a R}{aMR^2+rR^2 - aMR - r^2R - Mr^2 a}.
	\end{equation*}
	Setting \(\theta\) equal to the right-hand side gives the desired result. Furthermore, the value on the right-hand side is maximized for fixed values of \(a\), \(M\), and \(R\) when
	\begin{equation*}
		r = \Xi (M, a, R;\mcF) :=  \frac{aMR+ R^2 - \sqrt{a^2M^2R^2 + aMR^3}}{aM + R}.
	\end{equation*}
	Thus the optimal radius can be given by plugging in this value for \(r\), from which we get
	\begin{equation*}
		\Theta(M,a,R;\mcF) := \frac{\Big(aMR - \sqrt{aMR^2(aM+R)}\Big)\Big(aMR + R^2 - \sqrt{aMR^2(aM+R)}\Big)}{2a^2M^2R - R\sqrt{aMR^2(aM+R)} + aM \Big(2R^2 - 3 \sqrt{aMR^2(aM+R)}\Big)}.
	\end{equation*}
\end{proof}

For the purposes of showing sub-exponential convergence of the sparse grid, we want to find the largest polyellipse in the region of analyticity. For one dimension, a Bernstein ellipse \(\mcE_{\sigma}\) is given by
\begin{equation*}
	\mcE_\sigma = \left\{z\in \bbC : \Real{z} = \frac{e^{\hat\sigma} + e^{-{\hat\sigma}}}{2} \cos(\theta) , \Imag{z} = \frac{e^{\hat\sigma} - e^{-{\hat\sigma}}}{2} \sin(\theta), \theta\in[0,2\pi), 0\leq \hat{\sigma} \leq \sigma \right\}.
\end{equation*}
For multiple dimensions, we define a polyellipse to be a direct product of Bernstein ellipses:
\begin{equation*}
	\mcE_{\sigma_1 ,\sigma_2, \ldots, \sigma_n} := \prod_{k=1}^n \mcE_{\sigma_k} \subset \bbC^n.
\end{equation*}
Applying \cref{theorem-analytic-ift-region} directly using uniform estimates for each point in \(\Gamma\) gives the analytic domain
\begin{equation*}
	\mathcal{G}_\Theta := \bigcup_{\by\in\Gamma} B_\Theta(\by),
\end{equation*}
where \(B_\Theta(\by)\) is a ball of radius \(\Theta = \Theta(M, a, R; \mcF)\) centered at \(\by\). Thus we want to fit the largest Bernstein ellipse into \(\mathcal G_\Theta\), as shown in \cref{errorestimates:figure1}.

\begin{figure}[ht]
	\begin{center}
		\begin{tikzpicture}[scale = 2, every node/.style={scale=2}]] 
			
			\node[shape=semicircle,rotate=270,fill=gray,semitransparent,inner
			sep=12.7pt, anchor=south, outer sep=0pt] at (1,0) (char) {};
			\node[shape=semicircle,rotate=90,fill=gray,semitransparent,inner
			sep=12.7pt, anchor=south, outer sep=0pt] at (-1,0) (char) {};
			\path [draw=none,fill=gray,semitransparent] (-1.001,-1) rectangle
			(1.001,1.001);
			\filldraw[fill=lightsteelblue]
			(0,0) ellipse (2 and 1);
			\node [below right,black] at
			(1.50,1.25) {$\mathcal{G}_\Theta$}; 
			\node [below
			right,black] at (0,-0.15) {${\cal E}_{\sigma_1,\dots,\sigma_{n}}$};
			\draw (1,-3pt) -- (1,3pt)   node [above] {$1$};
			\draw (-1,-3pt) -- (-1,3pt) node [above] {$-1$};
			\draw [-stealth,line width = 0.05cm] (-2.5,0) -- (2.5,0) node [above left]  {$\mathbb{R} $};
			\draw [-stealth, line width = 0.05cm] (0,-1.5) -- (0,1.5) node [below right] {$i \mathbb{R}$};
		\end{tikzpicture}
	\end{center}
	\caption{Embedding of the Bernstein polyellipse ${\cal E}_{\sigma_1,\dots,\sigma_{n}}:=\Pi^{n}_{k = 1} {\cal E}_{\sigma_k}$ in $\mathcal{G}_\Theta$.}
	\label{errorestimates:figure1}
\end{figure}
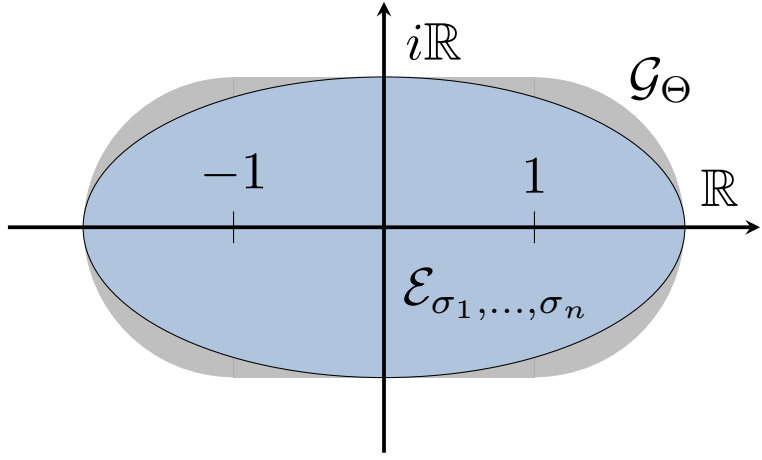

The following is a simple result following from \cref{theorem-analytic-ift-region}.
\begin{cor}\label{cor:implicit}
	Take \(\mcF\) to be the function designated in \cref{mcF-function}. Set the positive constants \(R\), \(M\), and \(a\) such that the following hold:
	\begin{enumerate}[label = (\roman*)]
		\item \(R> 0\) is small enough so that \([D_{u^*} \mcF(\by, u^*)]^{-1}\) exists whenever \(\dist(\by, \Gamma) \leq R\) and \(\|\Imag u^* \|_{\mcH(U)} \leq R\).
		\item \(M>0\) is large enough so that \(\| \mcF(\by^0 + \by, u^*(\cdot,\by^0) + u^*) \|_Z \leq M \) whenever \(\by^0 \in \Gamma\) and \(|\by|, \|u^*\|_{\mcH(U)} \leq R\).
		\item \(a>0\) is large enough so that \(\|[D_{u^*} \mcF(\by^0, u^*(\cdot;\by^0))]^{-1}\|_{\mcL(Z, \mcH(U))} \leq a\) for all \(\by^0\in \Gamma\).
	\end{enumerate}
	Then defining 
	\begin{equation}\label{sigma-star}
		\sigma_* := \log \left(\sqrt{\Theta^2 + 1} + \Theta \right)
	\end{equation}
	where \(\Theta = \Theta(M,a,R;\mcF)\), we have that the polyellipse \(\mcE_{\sigma_1, \sigma_2,\ldots,\sigma_N}\) is inside the region of analyticity for the solution \(\by \mapsto u^*(\cdot;\by)\) if \(\sigma_1 = \sigma_2 =\cdots = \sigma_N = \sigma_*\).
\end{cor}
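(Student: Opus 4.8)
The plan is to combine the abstract region-of-analyticity bound from \Cref{theorem-analytic-ift-region} with the specific verification that the hypotheses (i)--(iii) of the corollary match the hypotheses of that theorem, and then to convert the resulting Euclidean ball estimate $|\by| < \Theta$ into a statement about a Bernstein polyellipse. First I would fix an arbitrary $\by^0 \in \Gamma$ and recenter: set $\mcG(\mathbf z, v) := \mcF(\by^0 + \mathbf z, u^*(\cdot;\by^0) + v)$, which is analytic as a composition of the analytic map $\mcF$ (shown in the proof of \Cref{existence-of-region}) with a translation, satisfies $\mcG(0,0) = \mcF(\by^0, u^*(\cdot;\by^0)) = 0$ by \cref{implicit-equation}, and has $D_v \mcG(0,0) = D_{u^*}\mcF(\by^0, u^*(\cdot;\by^0))$, which is an isomorphism between $\mcH(U)$ and $Z$ by the argument already given in \Cref{existence-of-region} (it has the form of the linear interface operator, so \Cref{existence-uniqueness-linear-weak-solns} applies). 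Hypotheses (i), (ii), (iii) of the corollary then say precisely that $\mcG$ satisfies the hypotheses of \Cref{theorem-analytic-ift-region} on the ball $B = \{(\mathbf z, v) : |\mathbf z|, \|v\|_{\mcH(U)} \leq R\}$ with the constants $M$ and $a$: condition (i) gives invertibility of $D_v \mcG(\mathbf z, v)^{-1}$ on $B$, condition (ii) gives the bound $\|\mcG\|_Z \leq M$ on $B$, and condition (iii) gives $\|D_v\mcG(0,0)^{-1}\|_{\mcL(Z,\mcH(U))} \leq a$. (The bound on $\|\Imag u^*\|_{\mcH(U)}$ in (i) is what is needed since, after recentering at a real point $\by^0$, the perturbation $v$ ranges over a complex ball and its imaginary part is controlled by $R$; here one uses that $u^*(\cdot;\by^0)$ is real for $\by^0 \in \Gamma$.)

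Applying \Cref{theorem-analytic-ift-region} to $\mcG$ then yields an analytic solution branch $\mathbf z \mapsto v(\mathbf z)$ defined on the ball $|\mathbf z| < \Theta(M,a,R;\mcF)$, i.e.\ the solution $\by \mapsto u^*(\cdot;\by)$ of \Cref{strong-npbe-ref-domain} is analytic on $B_\Theta(\by^0)$. Since $\by^0 \in \Gamma$ was arbitrary and the constants $R$, $M$, $a$ were chosen uniformly over $\Gamma$, the solution is analytic on $\mathcal{G}_\Theta = \bigcup_{\by \in \Gamma} B_\Theta(\by)$; uniqueness of the solution on overlaps of the balls (from the implicit function theorem, or from uniqueness of the weak solution via \Cref{unique-weak-solns}) guarantees these local branches glue into a single analytic function on $\mathcal{G}_\Theta$.

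The last step is geometric: I need to check that the polyellipse $\mcE_{\sigma_*,\ldots,\sigma_*} \subset \bbC^N$ with $\sigma_* = \log(\sqrt{\Theta^2+1} + \Theta)$ is contained in $\mathcal{G}_\Theta$. In one dimension, the Bernstein ellipse $\mcE_\sigma$ has semi-axes $(e^\sigma + e^{-\sigma})/2$ and $(e^\sigma - e^{-\sigma})/2$, with foci at $\pm 1$; a direct computation shows that with the stated choice of $\sigma_*$ one has $e^{\sigma_*} = \sqrt{\Theta^2+1} + \Theta$, hence $e^{\sigma_*} - e^{-\sigma_*} = 2\Theta$ and $e^{\sigma_*} + e^{-\sigma_*} = 2\sqrt{\Theta^2+1}$, so the ellipse $\mcE_{\sigma_*}$ is exactly the boundary of the set of points within distance $\Theta$ of the segment $[-1,1]$ --- that is, $\mcE_{\sigma_*}$ lies in the $\Theta$-neighborhood of $[-1,1]$. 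Since $\Gamma = [-1,1]^N$, the $\Theta$-neighborhood of $[-1,1]$ in each coordinate, taken as a product, is contained in $\mathcal{G}_\Theta = \bigcup_{\by \in \Gamma} B_\Theta(\by)$ (a point whose $k$-th coordinate is within $\Theta$ of $[-1,1]$ for every $k$ need not be within Euclidean distance $\Theta$ of $\Gamma$ --- this is the one place requiring a small argument, so in fact one should note that either $N=1$, or the constant must be rescaled by $\sqrt{N}$, or $\mathcal G_\Theta$ should be understood as the product neighborhood; I would follow the convention used consistently in the sparse-grid references \cite{nobile2008a,castrillon2016} and state $\sigma_*$ accordingly). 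With the polyellipse embedded in the region of analyticity, the conclusion follows.

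I expect the main obstacle to be the bookkeeping in the recentering/gluing step --- verifying that the local analytic branches produced by the implicit function theorem at different base points $\by^0 \in \Gamma$ agree on overlaps and that the bound on the imaginary part of $u^*$ in hypothesis (i) is the right condition to ensure $D_{u^*}\mcF$ stays invertible along the complexified path --- together with the precise geometry of fitting the polyellipse into $\mathcal{G}_\Theta$, which hinges on the identity $e^{\sigma_*} = \Theta + \sqrt{\Theta^2+1}$ and the relation between Euclidean balls and the product structure of $\Gamma$.
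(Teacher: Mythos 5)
Your proof is correct and takes essentially the same route as the paper: apply \Cref{theorem-analytic-ift-region} at each $\by^0\in\Gamma$ to get analyticity on $\mathcal{G}_\Theta$ and then fit the polyellipse, a step the paper simply cites from \cite{castrillon2016analytic,castrillon2021stochastic} and for which you carry out the one-dimensional Bernstein-ellipse geometry (via $e^{\sigma_*}=\Theta+\sqrt{\Theta^2+1}$) explicitly. The $N$-dimensional norm ambiguity you flag is genuine and the paper passes over it; the cleanest fix is the one you suggest, namely observing that the Cauchy estimates underlying \Cref{theorem-analytic-ift-region} hold for any norm on $\bbC^n$, so taking $|\cdot|=|\cdot|_\infty$ makes each $B_\Theta(\by)$ a polydisk, $\mathcal{G}_\Theta$ then contains the coordinatewise product of $\Theta$-stadiums around $[-1,1]$, and $\mcE_{\sigma_*,\ldots,\sigma_*}$ embeds coordinate by coordinate.
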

\begin{proof}
	By applying \cref{theorem-analytic-ift-region} to each point \(\by^0 \in \Gamma\), we get that there is a region of analyticity for the solution \(\by \mapsto u^*(\cdot;\by)\), where a ball of radius \(\Theta\) centered at any \(\by^0\in \Gamma\) is contained in the region. The largest polyellipse \(\mcE_{\sigma_1, \sigma_2,\ldots, \sigma_N}\) with \(\sigma_1 = \sigma_2 = \cdots = \sigma_N\) in the region of analyticity can be computed. From \cite{castrillon2016analytic,castrillon2021stochastic}, we know the largest polyellipse occurs when \(\sigma^*\) is defined as in \cref{sigma-star}.
\end{proof}
\begin{rem}
	Conditions (ii) and (iii) of \cref{cor:implicit} are straightforwardly made to satisfy the conditions of \cref{theorem-analytic-ift-region}. The condition (i) follows from the specific form of the PDE in question. A sufficient condition for the inverse \([D_{u^*} \mcF(\by, u^*)]^{-1}\) to exist is \(\epsilon^*(\cdot;\by) > 0\), \(\det J(\cdot;\by) > 0\), \(\overline{\kappa}^2(\cdot;\by) \geq 0\), and \(\Real\cosh(u^*)\geq 0\). The first three inequalities can be satisified by choosing \(\by\) to be sufficiently close to \(\Gamma\).  The term \(\Real\cosh(u^*)\) will be strictly positive when \(u^*\) is real-valued, and only becomes negative when \(\Imag u^*\) is sufficiently large.
\end{rem}
\begin{rem}
	The estimate for the size of the polyellipse takes each \(\sigma_k\) for \(k=1,2,\ldots, N\) to be equal to \(\sigma^*\). This will only give the optimal estimate of the decay rate when using an isotropic sparse grid. For anisotropic sparse grids, we would need to choose different values for each \(\sigma_k\).
\end{rem}

\Cref{theorem-analytic-ift-region} show how we can obtain \emph{a priori} bounds on the region of analyticity after applying the implicit function theorem. To apply these bounds, one needs to find the values for the constants \(a\) and \(M\) (for a fixed \(R\)). For our problem, there must be some choice of constants that work, but computing them is tricky. The constant \(a\) can be difficult to estimate because it involves getting bounds on the solution of a \emph{backward} problem. That is, we ultimately want to find a bound on the norm of \([D_x \mcF(0, 0)]^{-1}\), which typically means solving a linear PDE. For simple linear operators and domains -- for example, a Helmholtz operator \(-\Delta  + k^2\) on the sphere-- this norm is possible to calculate explicitly. However, our domain has interfaces, which makes estimation difficult. For the moment, we set aside the problem of bounding the constant \(a\) and leave the task of optimizing the bounds for that value to future work. The constant \(M\) can be more easily estimated since we are now solving a \emph{forward} problem. That is, given some inputs to our (known) function \(\mcF\) we want to determine the size of the outputs. The remainder of this section is devoted to showing how the estimate for \(M\) can be obtained.

To get the explicit bounds needed to apply \cref{cor:implicit}, we will need to make assumptions on the parameters in the NPBE. Suppose that \(\Gamma = [-1,1]^N\). To simplify the arguments, we will assume that \(\epsilon^*_k\) and \((\overline{\kappa}^2)^*\) are piecewise constant and that \(g^* = 0\). Suppose that \(F\) has the form
\begin{equation*}
	F(\br;\by) = \br + \sum_{k=1}^N \sqrt{\mu_k} b_k(\br) y_k
\end{equation*}
where \(b_k \in C^2\). Assume that the \(b_k\)'s are normalized so that
\begin{equation*}
	\| b_k \|_{L^\infty(U)} = 1 \quad \text{ for } k =1,2,\ldots N,
\end{equation*}
and that the \(\mu_k\)'s are decreasing in value. Then the Jacobian, \(J(\br;\by)\) has the form
\begin{equation*}
	J(\br;\by) = I + \sum_{k=1}^N \sqrt{\mu_k} B_k(\br) y_k
\end{equation*}
where \(B_k(\br) = \partial b_k(\br)\). Denote 
\begin{equation}\label{mcB-defn}
	\mcB \by := \sum_{k=1}^N \sqrt{\mu_k} B_k(\cdot) y_k
\end{equation}
so that \(J(\cdot;\by) = I + \mcB\by\). Note that we can treat \(\mcB\) as a linear map from \(\bbC^N\) into \(C^1(U,\bbC^{3\times 3})\).

We will also assume that \(f^*\) has the form
\begin{equation*}
	f^*(\br;\by) =   \sum_{k=1}^{N_f} \xi(F(\br;\by) - F(\eta_k;\by))
\end{equation*}
where \(\xi\) is a Gaussian function and each \(\eta_k \in U\) is a fixed point.

Suppose that \(\mcF(\by^0,u^0) = 0\) for some \(\by^0 \in \Gamma\) and \(u^0 \in \mcH(U)\). To apply \cref{cor:implicit}, we want an estimate on 
\begin{equation}\label{difference-estimate}
	\|\mcF(\by^0 + \by , u^0 + u) - \mcF(\by^0, u^0)\|_{Z}.
\end{equation}
Note that the norm on the last three coordinates of \(\mcF\) (see \cref{mcF-function}) can be estimated by finding bounds for the trace operators and co-normal derivatives. Let us focus on the first coordinate of \cref{difference-estimate}, which is given by 
\begin{equation*}
	\begin{aligned}
		&[\mcL_k(\by^0 + \by)] (u^0_k + u_k) + (\overline \kappa^2)^*_k \sinh(u^0_k + u_k) \det J_k(\cdot;\by^0 + \by) - f_k^*(\cdot;\by^0 + \by) \det J_k(\cdot;\by^0 + \by) \\
		&\quad - [\mcL_k(\by^0)] (u^0_k) + (\overline \kappa^2)^*_k \sinh(u^0_k) \det J_k(\cdot;\by^0) - f_k^*(\cdot;\by^0) \det J_k(\cdot;\by^0 )
	\end{aligned}
\end{equation*}
for \(k = 1,2,3\). 

Estimating the above term in the \(L^2(U)\) requires us to define the norms of several other spaces in order for the calculation to be tractable. Norms in finite-dimensional vector spaces (e.g.\ \(\bbR^n\)) will be denoted with single bars, \(|\cdot|\), while norms in infinite-dimensional function spaces will be denoted with double bars, \( \| \cdot \| \). Similar notation will be used for the norms induced on linear operators on normed spaces. In particular, \(|\cdot|_p\) for \(1\leq p \leq \infty\) will be used to denote the typical \(\ell^p\) norms in \(\bbR^n\) or \(\bbC^n\) as well as the associated matrix norms. So if \(\bv \in \bbC^n\) and \(A\in \bbC^{n\times n}\), then \(|\mathbf v|_2\) will be the standard Euclidean norm of \(\bv\) and \[|A|_2 = \sup_{\bx \in \bbC^n\setminus\{0\}} \frac{|A\bx|_2}{|\bx|_2}.\]

We will assume \(L^2(U;\bbC^n)\) to have the norm
\begin{equation*}
	\| \bv(\cdot) \|_{L^2(U;\bbC^n)} = \| \,|\bv(\cdot)|_2 \,\|_{L^2(U;\bbR)} = \left( \int_U |\bv(\br) |_2^2\, d\br  \right)^{1/2}.
\end{equation*}
We will also assume that \(H^1(U;\bbC^n)\) has the norm
\begin{equation*}
	\| \bv(\cdot) \|_{H^1(U;\bbC^n)} = \|\bv(\cdot)\|_{L^2(U;\bbC^n)} + \sum_{i=1}^n \left\| \frac{\partial \bv}{\partial r_i} (\cdot) \right \|_{L^2(U;\bbC^n)}.
\end{equation*}
For the space \(C^1(U;\bbC^{3\times 3})\), we introduce the norm
\begin{equation*}
	\| B(\cdot) \|_{C^1(U;\bbC^{3\times 3})} := \max_{\substack{k=0,1 \\ i = 1,2,3}} \sup_{\br\in U} \left|\frac{\partial^k}{\partial r_i^k}B(\br)\right|_2.
\end{equation*}
Note that for any \(B\in C^1(U;\bbC^{3\times3})\) and \(\bv \in H^1(U;\bbC^3)\), we have 
\begin{equation*}
	\| B \bv \|_{H^1(U;\bbC^3)} \leq 4 \| B \|_{C^1(U;\bbC^{3\times 3})} \| \bv \|_{H^1(U;\bbC^3)},
\end{equation*}
which implies the continuous imbedding \(C^1(U;\bbC^{3\times 3}) \hookrightarrow \mcL(H^1(U;\bbC^3))\).

Recall that \(\mcB\) as defined in \cref{mcB-defn} is a linear map from \(\bbC^N\) into \(C^1(U;\bbC^{3\times3})\), and so inherits a natural norm from being a bounded linear operator between two Banach spaces. We introduce a different norm for these maps that will be easier to estimate and be an upper bound for the linear operator norm. Let 
\begin{equation*}
	\| \mcB \|_p := \left(\sum_{k=1}^N |\mu_k|^{p/2} \| B_k \|_{C^1(U,\bbC^{3\times 3})}^p  \right)^{1/p}
\end{equation*}
for \(1\leq p < \infty\) and 
\begin{equation*}
	\| \mcB \|_\infty := \max_{k=1,2,\ldots,N} \sqrt{\mu_k} \| B_k \|_{C^1(U, \bbC^{3\times 3})}.
\end{equation*}
Then for \(1\leq p , q \leq \infty\) with \(\frac 1 p + \frac 1 q = 1\), we have that 
\begin{equation}\label{holder}
	\| \mcB \by \|_{C^1(U,\bbC^{3\times 3})} \leq \|\mcB \|_{p} |\by|_q.
\end{equation}

For the following estimates on various parameters, the hypothesis 
\begin{equation}\label{small-b}
	\| \mcB \|_1 < \frac 1 4.
\end{equation}
is assumed, which defines  \(J^{-1}(\by^0+\by)\) in \(\mcL(H^1(U;\bbC^3))\).
\begin{prop}\label{prop:computations}
	Let \(\mcL = \mcL(H^1(U;\bbC^3))\) denote the space of bounded linear operators from \(H^1(U;\bbC^3)\) to itself. Suppose that \(\by^0\in\Gamma\) and \[|\by|_\infty < \frac 1 {4\|\mcB\|_1} - |\by^0|_\infty.\] Then we have the following bounds:
	\begin{align}
		\| J^{-1}(\by^0) \|_\mcL &\leq \frac{1}{1 - 4 \| \mcB \|_1 |\by^0|_\infty}, \label{Jinf-L} \\
		\| J^{-1}(\by^0 + \by) \|_\mcL &\leq \frac{1}{1 - 4 \| \mcB \|_1 (|\by^0|_\infty + |\by|_\infty)}, \label{Jinf-L2} \\
		\| (I + J^{-1}(\by^0)\mcB \by)^{-1} - I\|_{\mcL} &\leq \frac{4 \| \mcB \|_1 |\by|_\infty}{1 - 4 \| \mcB \|_1 (|\by^0|_\infty + |\by|_\infty)}, \label{I-J-B} \\
		|\det J(\by^0)| &\leq \frac 1 {(1 - \|\mcB\|_1 |\by^0|_\infty)^3},\label{absdet} \\
		|\det J(\by^0 + \by)| &\leq \frac 1 {(1 - \|\mcB\|_1 (|\by^0|_\infty + |\by|_\infty))^3}, \label{absdet2}\\
		\| \det J(\by^0) \|_{\mcL} & \leq \frac 4 {(1 - \|\mcB\|_1 |\by^0|_\infty)^3} \label{det-L} \\
		\| \det J(\by^0 + \by)  \|_{\mcL} & \leq \frac 4 {(1 - \|\mcB\|_1 (|\by^0|_\infty + |\by|_\infty))^3} \label{det-L2}\\
		\| \det(I + J^{-1}(\by^0)\mcB\by) -1 \|_\mcL &\leq 4 \left[ \left(\frac{1 - \|\mcB\|_1 |\by^0|_\infty }{1 - \|\mcB\|_1(|\by^0|_\infty + |\by|_\infty)} \right)^3  - 1\right] \label{det-I-J-B}.
	\end{align}
\end{prop}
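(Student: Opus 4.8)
The plan is to prove all eight inequalities inside the Banach algebra $\mcL := \mcL(H^1(U;\bbC^3))$, treating every matrix- or scalar-valued coefficient as the associated multiplication operator. The single input that drives everything is the estimate
\[
\|\mcB\by\|_{\mcL} \;\le\; 4\,\|\mcB\by\|_{C^1(U;\bbC^{3\times 3})} \;\le\; 4\,\|\mcB\|_1\,|\by|_\infty ,
\]
obtained by composing the imbedding $C^1(U;\bbC^{3\times 3})\hookrightarrow\mcL$ with \eqref{holder} (taken with $p=1$, $q=\infty$). Since $\by^0\in\Gamma$ forces $|\by^0|_\infty\le 1$ and \eqref{small-b} gives $\|\mcB\|_1<\tfrac14$, this yields $\|\mcB\by^0\|_{\mcL}\le 4\|\mcB\|_1<1$; and the hypothesis $|\by|_\infty<\tfrac{1}{4\|\mcB\|_1}-|\by^0|_\infty$ gives $\|\mcB(\by^0+\by)\|_{\mcL}\le 4\|\mcB\|_1(|\by^0|_\infty+|\by|_\infty)<1$. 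These two strict inequalities are exactly what make the Neumann series below converge.

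For \eqref{Jinf-L} and \eqref{Jinf-L2} I would expand $J^{-1}(\by^0)=(I+\mcB\by^0)^{-1}=\sum_{j\ge 0}(-\mcB\by^0)^j$ in $\mcL$ (and likewise with $\by^0+\by$); the sum of operator norms is the stated geometric series, and the algebra inverse coincides with multiplication by the pointwise matrix inverse because $\det J$ is bounded away from $0$ (Assumption~\ref{asum4}). For \eqref{I-J-B} the crucial algebraic fact is the identity $I+J^{-1}(\by^0)\mcB\by=J^{-1}(\by^0)\bigl(J(\by^0)+\mcB\by\bigr)=J^{-1}(\by^0)J(\by^0+\by)$, which both exhibits invertibility and will be reused for \eqref{det-I-J-B}. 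Writing $T:=J^{-1}(\by^0)\mcB\by$ and bounding $\|T\|_{\mcL}\le\|J^{-1}(\by^0)\|_{\mcL}\|\mcB\by\|_{\mcL}\le\tfrac{4\|\mcB\|_1|\by|_\infty}{1-4\|\mcB\|_1|\by^0|_\infty}<1$, the series $\|(I+T)^{-1}-I\|_{\mcL}\le\sum_{j\ge1}\|T\|_{\mcL}^j=\tfrac{\|T\|_{\mcL}}{1-\|T\|_{\mcL}}$ gives \eqref{I-J-B} after invoking monotonicity of $x\mapsto x/(1-x)$ on $[0,1)$.

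The determinant bounds I would handle pointwise. At a.e.\ $\br$ one has $\det J(\br;\by)=\det\bigl(I+(\mcB\by)(\br)\bigr)$ with $|(\mcB\by)(\br)|_2\le\|\mcB\by\|_{C^1}\le\|\mcB\|_1|\by|_\infty=:\delta<\tfrac14$, and for any $3\times 3$ matrix $A$ with $|A|_2=\delta<1$ the two elementary facts $|\det(I+A)|\le\|I+A\|_2^3\le(1+\delta)^3\le(1-\delta)^{-3}$ and $|\det(I+A)|=\prod_j|1+\lambda_j(A)|\ge(1-\delta)^3$ hold (using $|\lambda_j(A)|\le|A|_2$). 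The upper estimate gives \eqref{absdet} and \eqref{absdet2} directly. For \eqref{det-L}, \eqref{det-L2}, \eqref{det-I-J-B} the coefficient acts by multiplication on $H^1(U;\bbC^3)$, so — applying the imbedding to the corresponding scalar multiple of $I_3$ — its operator norm is at most $4$ times its $C^1(U)$ norm; the sup part of that norm is the pointwise estimate just recorded, and for \eqref{det-I-J-B} I would use the identity above to write $\det(I+J^{-1}(\by^0)\mcB\by)=\det J(\by^0+\by)/\det J(\by^0)$, combine the numerator's upper bound with the denominator's lower bound $(1-\|\mcB\|_1|\by^0|_\infty)^3$, and use $1+\tfrac{\|\mcB\|_1|\by|_\infty}{1-\|\mcB\|_1|\by^0|_\infty}\le\bigl(1-\tfrac{\|\mcB\|_1|\by|_\infty}{1-\|\mcB\|_1|\by^0|_\infty}\bigr)^{-1}$ to land on the stated right-hand side.

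The main obstacle — the one step that is not mere bookkeeping — will be controlling the \emph{derivative} part of the $C^1(U)$ norms of $\det J$ and of the ratio $\det J(\by^0+\by)/\det J(\by^0)$ needed for \eqref{det-L}, \eqref{det-L2}, \eqref{det-I-J-B}. Here I would use Jacobi's formula $\partial_{r_i}\det J=\det J\cdot\trace\bigl(J^{-1}\partial_{r_i}(\mcB\by)\bigr)$, so that $|\partial_{r_i}\det J|\le|\det J|\cdot 3\,|J^{-1}|_2\,|\partial_{r_i}(\mcB\by)|_2\le\tfrac{1}{(1-\delta)^3}\cdot\tfrac{3\delta}{1-\delta}=\tfrac{3\delta}{(1-\delta)^4}$, which is $\le(1-\delta)^{-3}$ precisely because $3\delta\le 1-\delta$; this is exactly where $\|\mcB\|_1<\tfrac14$ (hence $\delta<\tfrac14$, and likewise $\|\mcB\|_1(|\by^0|_\infty+|\by|_\infty)<\tfrac14$) is used. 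Differentiating the ratio by the quotient rule and Jacobi's formula, and feeding in the already-established bounds on $\|J^{-1}\|_{\mcL}$, $|\det J|$ and its derivative, closes \eqref{det-I-J-B}; the leftover trace/Frobenius-vs-spectral constants (the factor $3$ above) are routine. The only other care points are confirming the multiplication operators invert as the Neumann series predicts — legitimate since $\det J$ is bounded below — and consistently applying \eqref{holder} with conjugate exponent $q=\infty$ throughout.
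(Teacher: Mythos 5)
Your proposal is correct and follows essentially the same route as the paper's proof: the embedding $C^1(U;\bbC^{3\times3})\hookrightarrow\mcL$ with constant $4$, Neumann series for the operator inverses \eqref{Jinf-L}--\eqref{I-J-B}, the identity $J(\by^0+\by)=J(\by^0)[I+J^{-1}(\by^0)\mcB\by]$, pointwise matrix bounds for the determinants, and Jacobi's formula for the $C^1$ derivative term in \eqref{det-L}--\eqref{det-I-J-B}, with the observation that $3\delta\le 1-\delta$ once $\delta<\tfrac14$. The one place you diverge is cosmetic but pleasant: for the scalar determinant estimates \eqref{absdet}, \eqref{absdet2} and the leading term of \eqref{det-I-J-B} you use the elementary spectral bound $|\det(I+A)|\le(1+|A|_2)^3\le(1-|A|_2)^{-3}$ (and the eigenvalue lower bound $\ge(1-|A|_2)^3$), whereas the paper obtains the identical bounds by expanding $\det(I+A)$ via the trace power series $\exp\bigl(\sum_{j\ge1}(-1)^{j+1}\trace(A^j)/j\bigr)$; both yield exactly the same constants.
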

\begin{proof}
See \cref{app:computations} for the proof.
\end{proof}
Hence we have that 
\begin{equation}\label{estimate-linear}
	\begin{aligned}
		\|&[\mcL_k(\by^0 + \by)] (u^0_k + u_k) - [\mcL_k(\by^0)] (u^0_k)\|_{L^2(U_k)} \\
		&\leq |\epsilon_k^*|\, \| J^{-1}(\by^0 + \by)J(\by^0+\by)^{-\mathrm{T}} \det J(\by^0+\by) \nabla(u^0_k + u_k)  \\
		&\hspace{20em} - J^{-1}(\by^0)J(\by^0)^{-\mathrm{T}} \det J(\by^0) \nabla u^0_k\|_{H^1(U_k;\bbC^3)} \\
		&\leq |\epsilon_k^*| \Big(a(\by^0, \by) \| u_k \|_{H^2(U_k)} + b(\by^0, \by) \|u^0_k\|_{H^2(U_k)} \Big)
	\end{aligned}
\end{equation}
where 
\begin{equation*}
	a(\by^0, \by) := \left(\frac{1}{1 - 4 \| \mcB \|_1 (|\by^0|_\infty + |\by|_\infty)}\right)^2 \times \frac 4 {(1 - \|\mcB\|_1 (|\by^0|_\infty + |\by|_\infty))^3}
\end{equation*}
and 
\begin{equation*}
	\begin{aligned}
		b(\by^0,\by) := &\Bigg[ \left(\frac{4 \| \mcB \|_1 |\by|_\infty}{1 - 4 \| \mcB \|_1 (|\by^0|_\infty + |\by|_\infty)}\right)^2 \times 4 \left( \left(\frac{1 - \|\mcB\|_1 |\by^0|_\infty }{1 - \|\mcB\|_1(|\by^0|_\infty + |\by|_\infty)} \right)^3  - 1\right) \\
		&\quad+2 \times \frac{4 \| \mcB \|_1 |\by|_\infty}{1 - 4 \| \mcB \|_1 (|\by^0|_\infty + |\by|_\infty)} \times 4 \left( \left(\frac{1 - \|\mcB\|_1 |\by^0|_\infty }{1 - \|\mcB\|_1(|\by^0|_\infty + |\by|_\infty)} \right)^3  - 1\right) \\
		&\quad+ \left(\frac{4 \| \mcB \|_1 |\by|_\infty}{1 - 4 \| \mcB \|_1 (|\by^0|_\infty + |\by|_\infty)}\right)^2 \\
		&\quad + 2 \times \frac{4 \| \mcB \|_1 |\by|_\infty}{1 - 4 \| \mcB \|_1 (|\by^0|_\infty + |\by|_\infty)} \\
		&\quad + 4 \left( \left(\frac{1 - \|\mcB\|_1 |\by^0|_\infty }{1 - \|\mcB\|_1(|\by^0|_\infty + |\by|_\infty)} \right)^3  - 1\right) \Bigg] \times \left( \frac{1}{1 - 4 \| \mcB \|_1 |\by^0|_\infty}  \right)^2 \times \frac 4 {(1 - \|\mcB\|_1 |\by^0|_\infty)^3}.
	\end{aligned}
\end{equation*}
It can then be shown that the norm in \(L^2(U)\) is bounded by
\begin{equation*}
	\sqrt 3 \, \epsilon_{\max}\left( a(\by^0,\by) \| u \|_{\mcH}  + b(\by^0, \by) \|u^0\|_\mcH \right),
\end{equation*}
where \(\epsilon_{\max} := \max\{\epsilon_1, \epsilon_2,\epsilon_3\}\).

Let \(C_k>0\) denote the constant associated with the Banach algebra \(H^2(U_k)\). That is, \(C_k>0\) is a constant such that
\begin{equation*}
	\|uv\|_{H^2(U_k)}\leq C_k \| u \|_{H^2(U_k)} \times \| v \|_{H^2(U_k)}\quad \forall u,v\in H^2(U_k).
\end{equation*}

Then we have 
\begin{equation*}
	\begin{aligned}
		&\|\overline{\kappa}_k^2 \sinh(u^0_k + u_k) \det J(\by^0 + \by) - \overline{\kappa}_k^2 \sinh(u^0_k) \det J(\by^0) \|_{L^2(U_k)}   \\
		&\quad\leq \frac {2 |\overline{\kappa}^2_k|  \cosh(C_k(\|u^0_k\|_{H^2(U_k)} + \frac 1 2 \|u_k\|_{H^2(U_k)} )) \sinh(C_k\frac 1 2 \|u_k\|_{H^2(U_k)})} {(1 - \|\mcB\|_1 |\by^0|_\infty)^3} \left(\frac{1 - \|\mcB\|_1 |\by^0|_\infty }{1 - \|\mcB\|_1(|\by^0|_\infty + |\by|_\infty)} \right)^3 \\
		&\qquad + \frac { |\overline{\kappa}^2_k|  C_k^{-1} \sinh(C_k \|u_k\|_{H^2(U_k)})} {(1 - \|\mcB\|_1 |\by^0|_\infty)^3} \left[ \left(\frac{1 - \|\mcB\|_1 |\by^0|_\infty }{1 - \|\mcB\|_1(|\by^0|_\infty + |\by|_\infty)} \right)^3  - 1\right].
	\end{aligned}
\end{equation*}
Thus the \(L^2(U)\) norm is bounded by 
\begin{multline}\label{estimate-nonlinear}
	\frac{\sqrt 3\, \overline{\kappa}^2_{\max}}{(1 - \| \mcB \|_1 |\by^0|_\infty)^3} \Bigg[ 2 \cosh(C_{\max}(\|u^0_k\|_{\mcH} + \frac 1 2 \|u_k\|_{\mcH} )) \sinh(C_{\max}\frac 1 2 \|u_k\|_{\mcH}) \left(\frac{1 - \|\mcB\|_1 |\by^0|_\infty }{1 - \|\mcB\|_1(|\by^0|_\infty + |\by|_\infty)} \right)^3  \\ 
	+   C_{\max}^{-1} \sinh(C_{\max} \|u_k\|_{\mcH})  \left[ \left(\frac{1 - \|\mcB\|_1 |\by^0|_\infty }{1 - \|\mcB\|_1(|\by^0|_\infty + |\by|_\infty)} \right)^3  - 1\right] \Bigg]
\end{multline}
where \(C_{\max} :=\max\{C_1, C_2, C_3\}\).

Finally the forcing term can be dealt with by finding the derivatives with respect to \(y_k\) for \(k=1,2,\ldots, N\). We have that
\begin{equation*}
	\begin{aligned}
		&\frac{\partial}{\partial y_i} \xi(F(r;\by) - F(\eta_k;\by)) \det J(\br;\by) \\
		&\quad = \sum_{j=1}^3 \frac{\partial \xi}{\partial x_j} (F(\br; \by) - F(\eta_k; \by)) [\sqrt{\mu_i} (b^j_i(\br) - b^j_i(\mathbf{\eta}_k))] \det J(\br;\by) +  \xi(F(r;\by) - F(\eta_k;\by)) \\
		&\qquad + \xi(F(r;\by) - F(\eta_k;\by)) \det J(\br;\by) \mathrm{tr}\left(J^{-1}(\br;\by) \sqrt{\mu_i} B_i(\br)\right)
	\end{aligned}
\end{equation*}
and so
\begin{equation*}
	\left \| \frac{\partial}{\partial y_i} \xi(F(\cdot;\by) - F(\eta_k;\by)) \det J(\cdot;\by)\right\|_{L^2(U)} \leq 6 \sqrt{\mu_i} \max_{j=1,2,3} \left \| \frac{\partial \xi}{\partial x_j} \right\|_{L^2(\bbR^3)} + \|\xi\|_{L^2(\bbR^3)} \times \frac{3}{1-\|\mcB\|_1 |\by|} \times \|\mcB\|_\infty
\end{equation*}
Thus we get that
\begin{equation}\label{estimate-f}
	\begin{aligned}
		&\| f^*(\cdot;\by^0 + \by) \det J(\cdot;\by^0 + \by)  -  f^*(\cdot;\by^0 ) \det J(\cdot;\by^0 ) \|_{L^2(U)}  \\
		&\quad\leq N_f \cdot |\by| \cdot \left( 6 \sqrt{\mu_i} \max_{j=1,2,3} \left \| \frac{\partial \xi}{\partial x_j} \right\|_{L^2(\bbR^3)} +  \frac{3 \|\xi\|_{L^2(\bbR^3)} \|\mcB\|_\infty}{1-\|\mcB\|_1 |\by|}  \right).
	\end{aligned}
\end{equation}

Combining the estimates in \cref{estimate-linear,estimate-nonlinear,estimate-f} gives a bound on the \(L^2(U)\) part of 
\(\mcF(\by^0 + \by , u^0 + u) - \mcF(\by^0, u^0)\). We still have constants that have not been explicitly given (such as \(C_{\max}\) and the norm of the solution \(u^0\)), but estimating these would be difficult to do in the space of this paper.

\section{Sparse Grids}\label{sparsegrids}

Sparse grids are a mathematical technique used to efficiently approximate functions and solve problems in high-dimensional spaces. They provide a way to reduce the computational cost associated with high-dimensional problems by exploiting the sparsity of the underlying function.
In many real-world applications, such as optimization, machine learning, and scientific simulations, the dimensionality of the problem can be quite large. Traditional numerical methods often struggle to handle these high-dimensional scenarios due to the exponential growth of computational requirements with increasing dimensions.
Sparse grids offer a solution to this problem by selectively evaluating the function only at a subset of points in the high-dimensional space. The idea is to concentrate computational effort on regions that contribute the most to the overall approximation accuracy, while ignoring or approximating the function in less significant regions.

Sparse grids are constructed from tensor products of Lagrange iterpolation. Given a set of data points $(\zeta_0, z_0), (\zeta_1, z_1), \ldots, (\zeta_p, z_p) \in \tilde \Gamma \times \bbR $, where we define \(\tilde \Gamma := [-1,1]\) and the $\zeta_i$ values are distinct, Lagrange interpolation constructs a polynomial $P(\zeta)$ of degree at most $n$ that satisfies:
\[
	P(\zeta_i) = z_i, \quad \text{for } i = 0, 1, \ldots, p
\]
The polynomial $P(\zeta)$ is defined as the linear combination of Lagrange basis polynomials $l_i(\zeta)$, which are constructed to ensure that $P(\zeta_i) = z_i$ for each data point:
\[
	P(\zeta) = \sum_{i=0}^{p} z_i l_i(\zeta)
\]
The Lagrange basis polynomials are defined as:
\[
	l_i(\zeta) = \prod_{\substack{j=0 \\ j \neq i}}^{n} \frac{\zeta - \zeta_j}{\zeta_i - \zeta_j}
\]

These basis polynomials have the property that $l_i(\zeta_i) = 1$ and $l_i(\zeta_j) = 0$ for $j \neq i$, ensuring that the polynomial $P(\zeta)$ passes through the corresponding data point $(\zeta_i, z_i)$. It is clear that $P(\zeta) \in \mcP_{p}(\tilde \Gamma) := \text{\rm span}\{\zeta^m : \,m=0,\dots,p\}$.

We can now extend Lagrange interpolation to using  tensor products of 1D interpolants. Let
\[
	C^{0}(\Gamma) : = \{ v: \Gamma \rightarrow V\,\, \mbox{is continuous on $\Gamma$ and } \max_{y\in \Gamma} |v(y)| < \infty \},
\]
$\pp = (p_1,\ldots,$ $p_{N})$, and $\mcP_{ p_n}(\tilde \Gamma):=\text{\rm span}(y_n^m,\,m=0,\dots,p_n),$  for each dimension $n = 1,\dots,N$. Let
$\mcI^{m(i)}:C^{0}(\tilde \Gamma) \rightarrow \mcP_{m(i)-1}(\tilde \Gamma)$ be the Lagrange interpolation operator  where $i \in \bbN_0$, $m(0) = 0$, $m(1) = 1$ and in general $m(i) \in \bbN_0$ is the number of evaluation points at level $i$. Note that if $m(i) = 0$ then let $\mcP_{-1}(\Gamma) := \emptyset$.

Consider the vector of approximations $\ii = (i_1,i_2, \dots,i_N) \in \bbN^{N}_{0}$, and form the  space $\mcP_{\pp}(\Gamma) = \bigotimes_{n=1}^{N}\;\mcP_{p_n}(\tilde \Gamma)$ then the Lagrange interpolation for $N$ dimensions operator $\mcI^{N}_{\ii} :C^{0}(\Gamma) \rightarrow \mcP_{\pp}(\Gamma)$ can now be built as
\[
	\mcI^{N}_{\ii} = \mcI^{m(i_1)}_{1} \otimes \mcI^{m(i_2)}_{2}  \otimes \dots \otimes \mcI^{m(i_N)}_{N}.
\]
More explicitly for each dimension $n = 1, \dots, N$ let $\{y^{n}_{1},\dots,y^{n}_{m(i)}\} \subset \tilde \Gamma$ be a sequence of abcissas for the Lagrange interpolation operator $\mcI^{m(i_n)}_{n}$. Thus for any $\nu \in C^{0}(\Gamma)$
\[
	\mcI^{N}_{\ii} \nu(\by) = \sum_{k_1 = 1}^{m(i_1)} \sum_{k_2 = 1}^{m(i_2)} \dots \sum_{k_N = 1}^{m(i_N)}  \nu(y^{i_{1}}_{k_1},y^{i_{2}}_{k_2},\dots,y^{i_{N}}_{k_N}) l^{i_1}_{k_1} \otimes l^{i_2}_{k_2} \otimes \dots \otimes l^{i_N}_{k_N},
\]
where 
\[
	l^i_j(y) = \prod_{\substack{k=0 \\ j \neq k}}^{N} \frac{y - y^i_k}{y^i_k - y^j_k}.
\]
However, the dimensionality of $\mcP_{\pp}$ explodes as $\prod_{n=1}^N$ $(p_n+1)$ making Lagrange interpolation intractable for even a number of moderate dimensions. In contrast, if there exists a complex analytic extension of $\nu(\by)$ with respect to $\by$ then sparse grids are a better choice \cite{Smolyak63,Novak_Ritter_00,Back2011,nobile2008a}. They provide almost the same convergence accuracy of full tensor product grids, but with significant reductions in dimensionality. This is achieved by judiciously selecting a reduced set of monomials from the full tensor product. 

Let $\mm(\ii) = (m(i_1),\ldots,m(i_{N})) \in \bbZ^{N}$ be the vector of the number of evaluations points for each dimension. For a given non-negative integer $w$, we define the index set $\Lambda^{m,g}(w)$ as follows:
\[
	\Lambda^{m,g}(w) = \{\pp\in\Nset^{N}, \;\;  g(\mm^{-1}(\pp+\oone))\leq w\}.
\]
In this context, the function $g:\bbZ^{N} \rightarrow \bbZ$ acts as a restriction function along each dimension of the complete tensor grid.

The indices in $\Lambda^{m,g}(w)$ constitute the set of permissible polynomial moments $\Pol_{\Lambda^{m,g}(w)}(\Gamma)$, subject to the restrictions imposed by $(\mm,g,w)$. Specifically, this polynomial set is defined as:
\[
	\Pol_{\Lambda^{m,g}(w)}(\Gamma) := \mathrm{span}\left\{\prod_{n=1}^{N} y_n^{p_n}, \text{ with } \pp\in\Lambda^{m,g}(w)\right\}.
\]
Let's consider the difference operator along the $n^{th}$ dimension of $\Gamma$, denoted as 
\[
	{\Delta_n^{m(i)} :=} \mcI_n^{m(i)}-\mcI_n^{m(i-1)}.
\]
By taking the tensor product of these difference operators across all dimensions, we can construct a sparse grid. In this context, $w \in \bbN_0$ represents the desired approximation level. The sparse grid approximation of $\nu$ is then obtained as follows:
\[
	\mcS^{m,g}_w[\nu] = \sum_{\ii\in\bbZ^{N}: g(\ii)\leq w} \;\; \bigotimes_{n=1}^{N} {\Delta_n^{m(i_n)}}(\nu(\by)). 
\]

We have the flexibility to choose different values for the parameters $m$ and $g$. Our main objective is to achieve accurate results while controlling the increase in dimensionality of the space $\Pol_{\Lambda^{m,g}(w)}(\Gamma)$. To address this, we can utilize the well-known Smolyak sparse grid method introduced by Nobile et al. (2008), which can be constructed using the following formulas:
\[
	m(i) = \begin{cases}
			1, & \text{for } i=1 \\ 
			2^{i-1}+1, & \text{for } i>1 
		   \end{cases}\quad \text{ and } \quad g(\ii) = \sum_{n=1}^N (i_n-1).
\]
For this particular choice, the index set $\Lambda^{m,g}(w)$ is defined as follows: $\Lambda^{m,g}(w):=\{\pp\in\Nset^{N}: \;\; \sum_n f(p_n) \leq w\}$ where
\[
	f(p) = \begin{cases}
			0, \; p=0 \\
			1, \; p=1 \\
			\lceil \log_2(p) \rceil, \; p\geq 2
		   \end{cases}.
\]
Alternative choices, such as the Total Degree (TD) and Hyperbolic Cross (HC) grids, are described in \cite{castrillon2016analytic}. 

The last component of the sparse grid is the selection of the abcissas $\{y^{n}_{1},\dots,y^{n}_{m(i)}\} \subset [-1,1]$ along each dimension. One option is the extrema of Chebyshev polynomials:
\[
	y^n_j = -\cos \left( \frac{\pi(j-1)}{m(i) - 1} \right).
\]
This popular choice are denoted as Clenshaw-Curtis abscissas. It is worth noting that not all stochastic dimensions need to be treated equally. Some dimensions may contribute more to the sparse grid approximation than others. By customizing the restriction function $g$ according to the input random variables $y_n$ for $n = 1, \dots, N$, a more accurate \emph{anisotropic} sparse grid can be obtained \cite{Schillings2013,nobile2008b}. In this paper, for the sake of simplicity, we focus on isotropic sparse grids. However, extending the approach to an anisotropic setting is a straightforward.

Our current focus is on establishing error bounds for the sparse grid, specifically the norm $\|\nu - \mcS^{m,g}[\nu]$ $\|_{{L^{\infty}(\Gamma)}}$. This bound can be controlled by three key factors. Firstly, the number of dimensions, denoted as $N$, influences the bound. Secondly, the number of knots, denoted as $\eta$, in the sparse grid plays a role. However, the most crucial factor is the size of the complex region in the analytic extension of $\nu(\by)$ onto $\bbC^{N}$ and the following bound on the polyellipse
\[
	\tilde{M}(\nu) := \sup_{\bz \in \mcE_{\sigma_1, \dots, \sigma_{N}}} |\nu(\bz)|.
\]

With the parameters analytic extension parameters $(\sigma^{*}, \tilde M(\nu))$, the number of dimensions $N$ and  the level of the sparse grid $w$ the error of the sparse grid $\|\nu - \mcS^{m,g}[\nu]$ $\|_{{L^{\infty}(\Gamma)}}$ can be bounded. Define the following constants
\begin{align*}
		\sigma &= \sigma^*/2, & \tilde{C}_{2}(\sigma) &= 1 + \frac{1}{\log{2}}\sqrt{\frac{\pi}{2\sigma}}, & \delta^{*}(\sigma) &= \frac{e\log{(2)} - 1}{\tilde{C}_2 (\sigma)},  \\
		\mu_1 &= \frac{\sigma}{1 + \log (2N)}, & \mu_2(N) &= \frac{\log(2)}{N(1 + \log(2N))}, & \mu_3 &= \frac{\sigma \delta^{*} \tilde C_2(\sigma)}{1 + 2\log(2N)},
\end{align*}
\begin{align*}
	a(\delta,\sigma) &=\exp{\left(\delta \sigma \left\{\frac{1}{\sigma \log^{2}{(2)}}+ \frac{1}{\log{(2)}\sqrt{2 \sigma}}+ 2\left( 1 + \frac{1}{\log{(2)}} \sqrt{ \frac{\pi}{2\sigma} }\right)\right\}\right)}, \\
	C_1(\sigma,\delta,\tilde M(\nu)) &= \frac{4\tilde{M}(\nu) C(\sigma)a(\delta,\sigma)}{ e\delta\sigma}, \\ 
	{\mathcal Q}(\sigma,\delta^{*}(\sigma),N, \tilde M(\nu)) &= \frac{ C_1(\sigma,\delta^{*}(\sigma),\tilde M(\nu))}{\exp(\sigma\delta^{*}(\sigma) \tilde{C}_2(\sigma) )}\frac{\max\{1,C_1(\sigma,\delta^{*}(\sigma),\tilde M(\nu))\}^{N}}{|1- C_1(\sigma,\delta^{*}(\sigma),\tilde M(\nu))|}.
\end{align*}

\begin{theo}
	Suppose that $\nu \in C^{0}(\Gamma;\bbR)$ admits an analytic extension on ${\mathcal E}_{\sigma_1, \dots, \sigma_{N}}$.   Let $\mcS^{m,g}_{w}[\nu]$ be the sparse grid approximation of the function $\nu$ with Clenshaw-Curtis abcissas. If $w > N / \log{2}$ then 
	\begin{equation*}   
		\|\nu - \mcS^{m,g}_{w}[\nu]
		\|_{L^{\infty}(\Gamma)} 
		\leq 
		{\mathcal
			Q}(\sigma,\delta^{*}(\sigma),N, \tilde M(\nu))
		\eta^{\mu_3(\sigma,\delta^{*}(\sigma),N)}\exp
		\left(-\frac{N \sigma}{2^{1/N}} \eta^{\mu_2(N)} \right)
		, \\
		\label{erroranalysis:sparsegrid:estimate}
	\end{equation*}
	Furthermore, if $w \leq N / \log{2}$ then the following algebraic
	convergence bound holds:
	\begin{equation*}
		\begin{split}
			\| \nu - \mcS^{m,g}_{w}[\nu] \|_{L^{\infty}(\Gamma)}
			&\leq 
			\frac{C_1(\sigma,\delta^{*}(\sigma),\tilde M(\nu))
				\max{\{1,C_{1}(\sigma,\delta^{*}(\sigma),\tilde{M}(\nu))
					\}}^N
			}{
				|1 - C_1(\sigma,\delta^{*}(\sigma),\tilde M(\nu))|
			}\eta^{-\mu_1}.
		\end{split}
		\label{erroranalysis:sparsegrid:estimate2}
	\end{equation*}
	\label{erroranalysis:theorem1}
\end{theo}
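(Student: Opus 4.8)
The plan is to follow the standard Smolyak sparse-grid error analysis for analytic integrands from \cite{nobile2008a,castrillon2016analytic,castrillon2021stochastic}, taking as inputs the polyellipse $\mcE_{\sigma_1,\ldots,\sigma_N}$ with $\sigma_1=\cdots=\sigma_N=\sigma^*$ supplied by \cref{existence-of-region,cor:implicit} and the bound $\tilde M(\nu)=\sup_{\bz\in\mcE_{\sigma_1,\ldots,\sigma_N}}|\nu(\bz)|$. First I would exploit the telescoping structure of the Smolyak operator: since $\sum_{\ii\in\bbN_0^N}\bigotimes_{n=1}^N\Delta_n^{m(i_n)}\nu=\nu$ and $\mcS^{m,g}_w[\nu]=\sum_{g(\ii)\leq w}\bigotimes_{n=1}^N\Delta_n^{m(i_n)}\nu$, we get
\[
	\nu-\mcS^{m,g}_w[\nu]=\sum_{\ii\in\bbN_0^N,\, g(\ii)>w}\;\bigotimes_{n=1}^N\Delta_n^{m(i_n)}\nu,
\]
so the problem reduces to (a) bounding $\|\bigotimes_n\Delta_n^{m(i_n)}\nu\|_{L^\infty(\Gamma)}$ for a single multi-index and (b) summing that bound over the complement of the admissible set $\Lambda^{m,g}(w)$.

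For step (a) I would record the one-dimensional Clenshaw--Curtis estimate: if $v$ is holomorphic on the Bernstein ellipse $\mcE_{\sigma^*}$, then combining best-polynomial-approximation bounds for analytic functions with the logarithmic growth of the Lebesgue constants gives $\|v-\mcI^{m(i)}v\|_{L^\infty(\tilde\Gamma)}\leq c(\sigma^*)(1+\log m(i))\,e^{-\sigma^*(m(i)-1)}\sup_{z\in\mcE_{\sigma^*}}|v(z)|$, hence a similar bound for the detail operator $\Delta^{m(i)}=\mcI^{m(i)}-\mcI^{m(i-1)}$. Applying this one coordinate at a time — using that the analytic extension of $\nu$ on $\mcE_{\sigma_1,\ldots,\sigma_N}$ is holomorphic and bounded by $\tilde M(\nu)$ in each variable separately when the others are frozen on the ellipse — yields the tensorized estimate
\[
	\Big\|\bigotimes_{n=1}^N\Delta_n^{m(i_n)}\nu\Big\|_{L^\infty(\Gamma)}\leq \tilde M(\nu)\prod_{n=1}^N c(\sigma^*)\big(1+\log m(i_n)\big)\,e^{-\sigma^*(m(i_n)-1)}.
\]
With $\sigma^*=2\sigma$ and $m(i)=2^{i-1}+1$, the exponential factor behaves like $e^{-2\sigma\,2^{i_n-1}}$, and absorbing the slowly varying logarithmic Lebesgue factors into the constants is exactly what produces $\tilde C_2(\sigma)$, $a(\delta,\sigma)$ and $C_1(\sigma,\delta,\tilde M(\nu))$ as defined before the statement.

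For step (b), using $g(\ii)=\sum_n(i_n-1)$ the constraint $g(\ii)>w$ is a shifted simplex, and the doubly-exponential decay of $e^{-2\sigma\,2^{i_n-1}}$ lets one dominate the tail sum by the product of a combinatorial factor counting the multi-indices at each level (of order $\binom{w+N}{N}$, which after summing the geometric series in each coordinate produces the $\max\{1,C_1\}^N$ dependence) and a geometric factor in $w$; inserting the free parameter $\delta$ and choosing its optimal value $\delta^*(\sigma)$ balances these competing growth and decay rates. It then remains to relate the level $w$ to the number of knots $\eta$ through the standard counting bound $\eta\leq c\,2^w w^{N-1}$ (equivalently a lower bound on $w$ in terms of $\eta$) and to invert it: when $w>N/\log 2$ this inversion yields the sub-exponential rate $\mathcal Q(\sigma,\delta^*(\sigma),N,\tilde M(\nu))\,\eta^{\mu_3}\exp(-N\sigma\,2^{-1/N}\eta^{\mu_2(N)})$, while for $w\leq N/\log 2$ only the algebraic term $\eta^{-\mu_1}$ survives, giving the second displayed bound with the stated prefactor.

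The main obstacle is the explicit bookkeeping in step (b): carrying out the summation over $\{\ii:g(\ii)>w\}$ and the knot-count inversion while tracking the precise constants $\mu_1,\mu_2(N),\mu_3,\tilde C_2(\sigma),\delta^*(\sigma),a(\delta,\sigma),C_1,\mathcal Q$, optimizing the trade-off encoded in $\delta^*(\sigma)$, and cleanly handling the case split at $w=N/\log 2$. This is essentially a re-derivation of the convergence theorem of Nobile et al.\ and its refinements in \cite{castrillon2016analytic,castrillon2021stochastic}; the only point specific to our setting is that the analyticity parameter $\sigma^*$ and the modulus bound $\tilde M(\nu)$ needed as hypotheses are precisely those furnished by \cref{existence-of-region} and \cref{cor:implicit} for a quantity of interest depending analytically on $\by\mapsto u^*(\cdot;\by)$, so those results close the loop.
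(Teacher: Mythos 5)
The paper does not actually prove this theorem: its entire proof is the one-line citation \emph{Theorem 3.10 and 3.11 in \cite{nobile2008a}}. Your outline---telescoping the Smolyak operator into detail tensors, one-dimensional Clenshaw--Curtis error bounds for functions holomorphic on a Bernstein ellipse, tensorization, tail summation over $\{\ii : g(\ii)>w\}$, and inversion of the knot count to produce the case split at $w = N/\log 2$---is a correct sketch of the argument underlying that citation, so in substance you take the same route; the only difference is that the paper defers every piece of that bookkeeping (the constants $\mu_1,\mu_2,\mu_3,\tilde C_2,\delta^*,a,C_1,\mathcal Q$, the optimization in $\delta$, the tail combinatorics) to the reference rather than re-deriving it. One small correction: the theorem is stated abstractly for any $\nu$ admitting the stipulated analytic extension, so the invocation of \cref{existence-of-region} and \cref{cor:implicit} to supply $\sigma^*$ and $\tilde M(\nu)$ is part of how the theorem is later \emph{applied} to the NPBE quantity of interest, not part of the proof of the theorem itself.
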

\begin{proof} Theorem 3.10 and 3.11 in \cite{nobile2008a}.
\end{proof}

\section{Numerical Results}\label{numerics}

We now test the complex analyticity result for the NPBE by computing the potential field of the Trypsin protein (PDB:1ppe \cite{Berman2000}, $n = 1{,}852$ atoms) submerged in a solvent. In Figure \ref{NumericalResults:Fig1} (a) the secondary structure of the Trypsin molecule is rendered
with a meshed surface of the molecular boundary. This corresponds to the molecular boundary obtained by rolling a solvent atom around the molecule. This boundary corresponds to the interface $I_1$. Inside the molecule the dielectric is set to $\epsilon = 70$ and outside the boundary it is set to $\epsilon = 1$ (e.g. solvent dielectric). Note that these dielectric values are unit-less. The second boundary $I_2$ (not rendered) corresponds to the ion-accessible surface. The Debye-H\"uckel parameter, $\overline{\kappa}$, is set to zero inside the surface and non-zero outside. The entire protein is contained in a cubic domain $\mcD$ measuring $70 \times 70 \times 70$ \AA\,  and the Dirichlet boundary conditions are set to zero, i.e., $u \equiv 0$ on \(\partial \mcD\). The temperature of the solvent is set to $T = 310$ Kelvin. Let $\mcC := \{ \bx_1, \dots, \bx_n\}$ correspond to set of the location of the molecular atoms. This information is contained in the PDB file. In theory and in practice these locations represent point charges that are replaced with functions $L^{2}(\Omega)$ as this guarantee the existence of a unique solution for the NPBE \cite{holst1994poisson}. The potential field, which corresponds to the solution of the NPBE, is then solved using APBS.

\begin{figure}[h]
	\centering
	\begin{tabular}{c c}
		\begin{tikzpicture}
			\node at (0,0) {\includegraphics[scale=0.4, trim = 5cm 0cm 5cm 0cm, clip]{./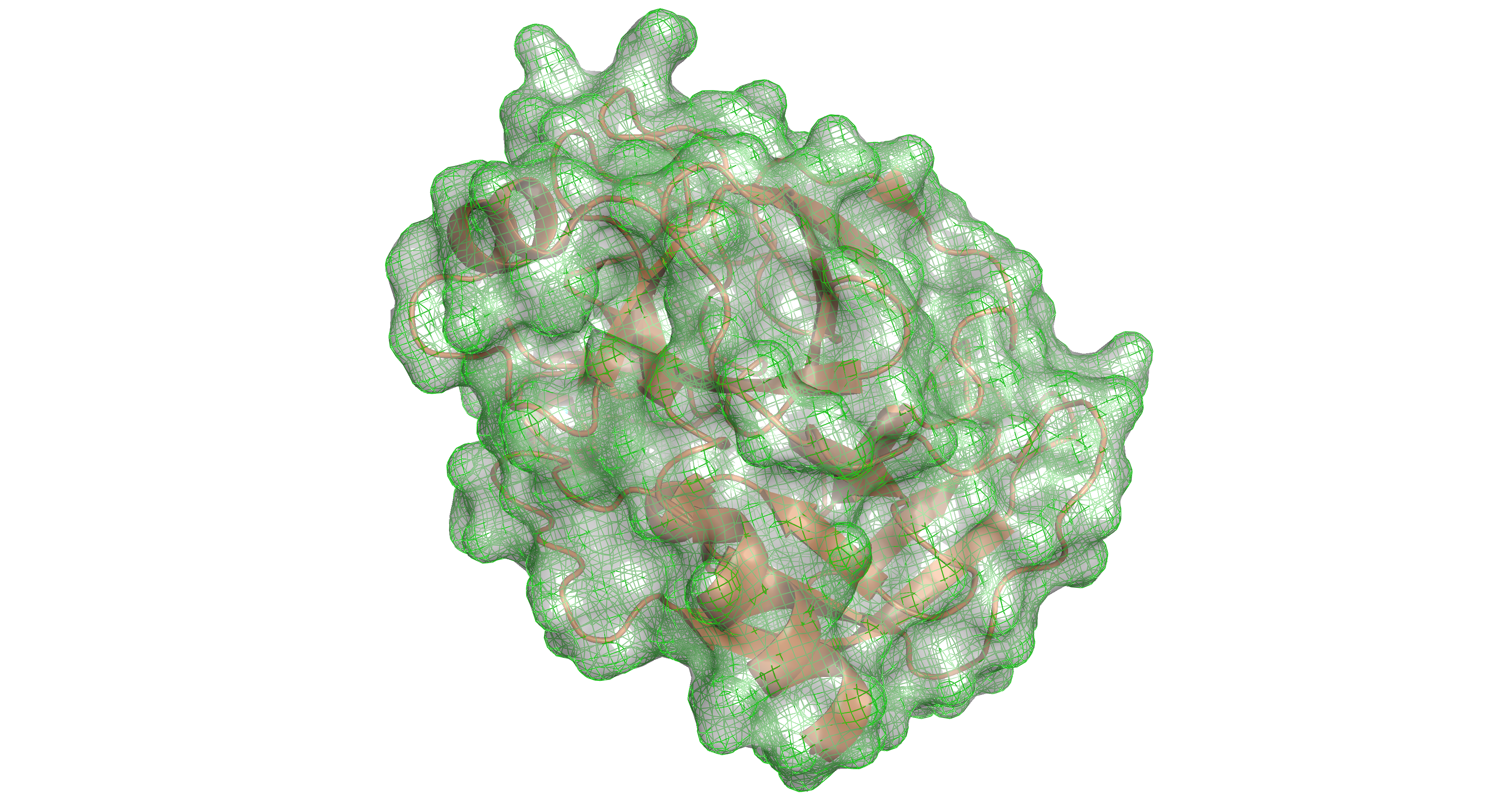}};
			\node at (1,2.5) {\Large $I_1$};
		\end{tikzpicture}
		&
		\begin{tikzpicture}[scale = 0.8, every node/.style={scale=0.8}]]   
			\node at (0,0) {\includegraphics[scale=0.65, trim = 4.5cm 7.5cm 4.5cm 7.75cm, clip]{./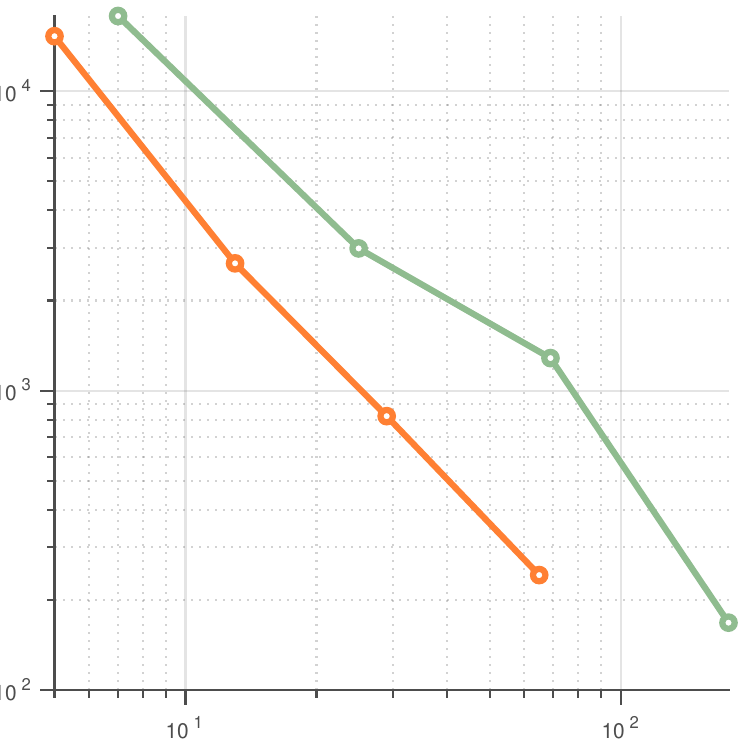}};
			\node [rotate = 90] at (-4.75,0.3) {\Large 
				$|\eset{Q(\hat u)} - \eset{\mcS^{m,g}_{w}[Q(\hat u)]}|$}; 
			\node at (1.25,-.4) {\Large N = 2};
			\node at (2,1) {\Large N = 3};
			\node at (0,-4) {\Large knots};
		\end{tikzpicture} \\
		(a) & (b)
	\end{tabular}
	\caption{Trypin protein NPBE test. (a) 
		Visual representation of the Trypsin molecule's secondary structure, rendered as a meshed surface that outlines the molecular boundary. This boundary is obtained by rolling the molecule with a solvent atom and is referred to as interface $I_1$. The potential field is computed with the
		APBS software for each knot in the sparse grid. (b) Convergence graphs of the error $|\eset{Q(\hat u)} - \eset{\mcS^{m,g}_{w}[Q(\hat u)]}|$ giving stochastic shifts of the molecular domain. We notice that the error decays
		algebraically.}
	\label{NumericalResults:Fig1}
\end{figure}

The protein will be now shifted using a stochastic model inside the domain $\mcD$. Since boundary
conditions are set to zero the solution will not be a simple translation, but will depend nonlinearly on the shift of the atom locations given by $\mcC$. For each shift, the domain of the Trypsin molecule is discretized and the potential field is solved using APBS. Let $\mcC(\omega)$ correspond to the set of atom locations shifted by the event outcome $\omega \in \Omega$, i.e.
\[
	\mcC(\omega) = \{\bx = x_0 + \sum_{k = 1}^{N} \alpha_{k} \bbe_{k}Y_{k}(\omega) \,|\, x_0 \in \mcC\},
\]
where
\[
	\bbe_1 = [1,0,0]^{T},  \bbe_2 = [0,1,0]^{T},  \bbe_3 = [0,0,1]^{T}, \alpha_1 = \alpha_2 = \alpha_3 = 10,
\]
$N \leq 3$, and $Y_1(\omega)$, $Y_2(\omega)$, $Y_3(\omega)$ are uniformly distributed  over the domain $[-\sqrt{3},\sqrt{3}]$  and independent of each other. Thus for each event
$\omega \in \Omega$ each element of $\mcC$ are shifted as follows:
\[
	\bx_k  \rightarrow \bx_k +  \sum_{k = 1}^{N} \alpha_{k} \bbe_{k}Y_{k}(\omega),
\]
for $k = 1,\dots, N$. Suppose $\hat u(x,Y_1,\dots,Y_N)$ is the solution of the APBS with 
respect to $Y_1,\dots,Y_N$ and let
\begin{equation}\label{QoI}
	Q(\hat u) := \int_{\mcD} \hat u(x,Y_1,\dots,Y_N)\,\mbox{d}x
\end{equation}
with respect to stochastic domain shifts $Y_1,\dots,Y_N$. Using the sparse grid the error $|\eset{Q(\hat u)} - \eset{\mcS^{m,g}_{w}[Q(\hat u)]}|$ is computed.

In Figure \ref{NumericalResults:Fig1} (b) the convergence graphs are plotted for $w = 2,3,4,5$ for $N = 2$. We assume that $w = 7$ is the actual value for $\eset{Q(\hat u)}$. Notice that the convergence rate decays algebraically. This is consistent with \cref{erroranalysis:theorem1}. We observe the similar algebraic decay for $N = 3$.

\section{Conclusion}

In this paper we show the existence of complex analytic extension of the solution of the NPBE with respect to the random perturbations of the domain, and its application to UQ is explored. This is a difficult problem due to the exponentially-growing nonlinear term and the discontinuity of the interface. The analyticity of solutions holds significant practical implications for efficiently computing quantities of interest. Any
bounded linear map $Q: \mcH(U) \rightarrow \bbR$
can be computed at an algebraic or sub-exponential rate, since they are analytic with respect to solutions \(\tilde{u} \in \mathcal{H}(U)\). Notably, the linearity of surface integrals allows for the direct application of these results in such cases.

We have also given estimates on the region of analyticity for the solution of the NPBE, which would allow for explicit calculations of the convergence rate of the sparse grid quadrature.  However the estimate of the region relies on constants, whose exact values are difficult to determine, a topic of interest that are left for future work.

More generally, the framework developed here is applicable to other problems in UQ. The strategy to applying these results to other problems is roughly as follows:
\begin{enumerate}[label = \arabic*.]
    \item Rewrite the problem as a functional equation, \(\mcF(\by, u) = 0\), where \(\by\) are the stochastic parameters in \(\bbR^N\) or \(\bbC^N\) and \(u\) represents the solution to the nonlinear PDE or equation in an appropriate Banach space. If the problem involves an interface, then a space similar to \(\mcH(U)\) may be useful.
    \item Applying the implicit function theorem (similar to how it was done for \cref{existence-of-region}) gives a region of analyticity for the solution \(\by \mapsto u(\by)\). It is important that the Banach space for \(u\) is appropriately chosen so that the Fr\'{e}chet derivative \(D_u \mcF\) is an isomorphism.
    \item \Cref{theorem-analytic-ift-region} can be applied to estimate the region of analyticity. This gives useful convergence results for numerical methods (e.g.\ sparse grid quadratures). To get explicit bounds, one needs to estimate the norm of the linear operator \([D_u F]^{-1}\), which for simple domains and operators can be computed.
\end{enumerate}

Thus the results in this paper can be more broadly implemented in other nonlinear UQ problems in order to demonstrate the analyticity of solutions and observables as well as determine estimates on the region of analyticity. 

\appendix

\section{Computations for estimates}\label{app:computations}

\begin{proof}[Proof of \cref{prop:computations}]
	The space \(\mcL\) is nice to work with since it is a Banach algebra, i.e., for \(A_1, A_2 \in \mcL\) we have \[\|A_1A_2\|_{\mcL} \leq \|A_1\|_\mcL \|A_2\|_\mcL.\] We will use this fact frequently in the proof.
	
	From \cref{holder}, we have that 
	\begin{equation*}
		\sigma_{\mathrm{max}} (\mcB \by (\br)) = |\mcB\by(\br)|_2 \leq \|\mcB \|_1 |\by|_\infty \quad \text{for any } \br\in U
	\end{equation*}
	where \(\sigma_{\mathrm{max}}(\cdot)\) denotes the largest singular value of a matrix. Thus (suppressing the \(\br\in U\) term)
	\begin{equation*}
		\begin{aligned}
			\sigma_{\mathrm{max}}(J(\by)) &= |J(\by)|_2 \leq 1 + \| \mcB\|_1 |\by|_\infty \\
			\sigma_{\mathrm{min}} (J(\by))  &\geq 1 - \sigma_{\mathrm{max}}(\mcB \by) \geq 1 - \|\mcB\|_1 |\by|_\infty,
		\end{aligned}
	\end{equation*}
	where \(\sigma_{\mathrm{min}}(\cdot)\) represents the smallest singular value of a matrix.
	
	We will need estimates on the norms \(J^{-1}(\by^0 + \by)\) and \(\det J(\by^0 + \by)\), and so the following identity will be useful:
	\begin{equation*}
		J(\by^0 + \by) = J(\by^0) [ I + J^{-1}(\by^0) \mcB\by].
	\end{equation*}
	By the assumption \cref{small-b}, we have that \(J(\by^0)\) is invertible, and the inverse is given by the Neumann series
	\begin{equation*}
		J^{-1}(\by^0) = I + \sum_{k=1}^\infty (-1)^k (\mcB \by^0)^k.
	\end{equation*}
	Thus
	\begin{align*}
		\| J^{-1}(\by^0) \|_{\mcL} &\leq 1 + \sum_{k=1}^\infty (\|\mcB \by^0\|_\mcL)^k \\
		&\leq 1 + \sum_{k=1}^\infty (4 \| \mcB \by^0 \|_{C^1})^k \\
		&= \frac{1}{1 - 4 \| \mcB\|_1 |\by^0|_\infty},
	\end{align*}
	which give \cref{Jinf-L}. \Cref{Jinf-L2} is shown similarly. Also,
	\begin{align*}
		\|(I + J(\by^0)\mcB\by)^{-1} - I \|_\mcL &\leq \sum_{k=1}^\infty \| J^{-1}(\by^0) \mcB\by\|_\mcL ^k \\
		&\leq \sum_{k=1}^\infty \left( \frac{1}{1 - 4 \| \mcB\|_1 |\by^0|_\infty} \right)^k (4 \|\mcB \|_1 |\by|_\infty)^k \\
		&= \frac{4 \| \mcB\|_1 |\by|_\infty}{1 - 4 \| \mcB \|_1 (|\by|_\infty + |\by^0|_\infty)},
	\end{align*}
	which gives \cref{I-J-B}.
	To estimate \(\|\det J(\by) \|_\mcL\), we will need to bound both \(|\det J(\by)|\) and \(|\frac{\partial}{\partial r_i} \det J (\by) |\) over \(U\). For a matrix \(A\) with \(\sigma_{\mathrm{max}}(A) < 1\), we have 
	\begin{equation*}
		\det(I+A) = 1 + \sum_{k=1}^\infty \frac 1 {k!} \left( - \sum_{j=1}^\infty \frac{(-1)^j}{j}\mathrm{tr}(A^j)\right)^k.
	\end{equation*}
	Therefore, 
	\begin{align*}
		|\det J(\by^0) | &= |\det (I + \mcB \by^0) | \\
		&\leq 1 + \sum_{k=1}^\infty \frac 1 {k!} \left(  \sum_{j=1}^\infty \frac{1}{j}\mathrm{tr}( (\mcB \by^0)^j)\right)^k \\
		&\leq 1 + \sum_{k=1}^\infty \frac 1 {k!} \left(  \sum_{j=1}^\infty \frac{3}{j}\sigma_{\mathrm{max}}( \mcB \by^0)^j\right)^k \\
		&\leq 1 + \sum_{k=1}^\infty \frac 1 {k!} \left(  \sum_{j=1}^\infty \frac{3}{j} \|\mcB\|_1^j |\by^0|_\infty^j\right)^k \\
		&= 1 + \sum_{k=1}^\infty \frac 1 {k!}(-3 \ln(1 - \|\mcB\|_1 |\by^0|_\infty))^k \\
		&= \frac{1}{(1- \|\mcB\|_1 |\by^0|)^3},
	\end{align*}
	which gives \cref{absdet}. The inequality in \cref{absdet2} is similarly shown.
	
	Using Jacobi's formula, we have that
	\begin{align*}
		\left|\frac{\partial}{\partial r_i} \det J(\by^0) \right| &\leq |\det J(\by^0)| \times \left|\mathrm{tr}(J^{-1}(\by^0)  \frac{\partial J}{\partial r_i}(\by^0))\right| \\
		&\leq \frac{1}{(1- \|\mcB\|_1 |\by^0|)^3} \times 3 \sigma_{\mathrm{min}}(J(\by^0))^{-1} \times \sigma_{\mathrm{max}} \left( \frac \partial {\partial r_i} \mcB\by^0\right) \\
		&\leq \frac{1}{(1- \|\mcB\|_1 |\by^0|)^3} \times \frac{3}{1 - \|\mcB\|_1 |\by^0|} \times \|\mcB\|_1 |\by^0|_\infty \\
		&= \frac{3 \|\mcB\|_1 |\by^0|_\infty}{(1- \|\mcB\|_1 |\by^0|)^4}
	\end{align*}
	Therefore,
	\begin{align*}
		\|\det J(\by^0) \|_\mcL &\leq 4 \| \det J(\by^0) \|_{C^1} \\
		&\leq 4 \max\left\{ \frac{1}{(1-\|\mcB\|_1 |\by^0|_\infty)^3}, \frac{3 \|\mcB\|_1 |\by^0|_\infty}{(1- \|\mcB\|_1 |\by^0|)^4}\right\} \\
		&= \frac{4}{(1-\|\mcB\|_1 |\by^0|_\infty)^3}
	\end{align*}
	where the last line follows from our assumptions \cref{small-b} and \(\by^0 \in \Gamma = [-1,1]^N\). This gives \cref{det-L}, and \cref{det-L2} is proved similarly.
	
	Lastly, we want to get the estimate \cref{det-I-J-B}. We have that
	\begin{align*}
		|\det(I + J^{-1}(\by^0)\mcB \by) -1 | &\leq \sum_{k=1}^\infty \frac 1 {k!} \left( \frac 1 j |\mathrm{tr}\left((J^{-1}(\by^0)\mcB \by\right)^j)|\right)^k \\
		&\leq \sum_{k=1}^\infty \frac 1 {k!} \left( \sum_{j=1}^\infty \frac 3 j \left( \frac{\|\mcB\|_1 |\by|_\infty}{1 - \|\mcB\|_1 |\by^0|_\infty} \right)^j \right)^k \\
		&= \left(\frac{1 - \|\mcB\|_1 |\by^0|}{1 - \|\mcB\|_1(|\by^0|_\infty + |\by|_\infty)}\right)^3 - 1.
	\end{align*}
	Using Jacobi's formula, we have that the corresponding derivative can be bounded as follows:
	\begin{align*}
		&\left|\frac \partial {\partial r_i} \det (I + J^{-1}(\by^0)\mcB\by) \right| \\
		&\quad \leq |\det(I + J^{-1}(\by^0)\mcB \by)| \times \left| \mathrm{tr} \left(  (I + J^{-1}(\by^0)\mcB\by)^{-1} \left(\frac{\partial J^{-1}(\by^0)}{\partial r_i} \mcB \by + J^{-1}(\by^0) \frac{\partial \mcB\by}{\partial r_i}\right) \right) \right|.
	\end{align*}
	Note that
	\begin{align*}
		\sigma_{\mathrm{min}}(I + J^{-1}(\by^0) \mcB\by) &= \sigma_{\mathrm{min}}(J^{-1}(\by^0) (I + \mcB\by^0 + \mcB\by)) \\
		&\geq \sigma_{\mathrm{min}}(J^{-1}(\by^0)) \sigma_{\mathrm{min}}(I + \mcB\by^0 + \mcB\by) \\
		&\geq \frac{1 - \|\mcB\|_1(|\by^0| + |\by|_\infty)}{1 + \|\mcB\|_1 |\by^0|_\infty}.
	\end{align*}
	We also have that 
	\begin{equation*}
		\frac{\partial J^{-1}(\by^0)}{\partial r_i} = - J^{-1}(\by^0) \frac{\partial \mcB\by^0}{\partial r_i}J^{-1}(\by^0) 
	\end{equation*}
	so that 
	\begin{equation*}
		\sigma_{\mathrm{max}} \left( \frac{\partial J^{-1}(\by^0)}{\partial r_i} \right) \leq \frac{\|\mcB\|_1 |\by^0|_\infty}{(1-\|\mcB\|_1 |\by^0|_\infty)^2}.
	\end{equation*}
	Therefore,
	\begin{align*}
		&\left|\frac{\partial}{\partial r_i} \det (I + J^{-1}(\by^0) \mcB\by)\right| \\
		&\quad\leq \left[ \left(\frac{1 - \|\mcB\|_1 |\by^0|}{1 - \|\mcB\|_1(|\by^0|_\infty + |\by|_\infty)}\right)^3 - 1 \right] \times 3 \times \frac{1 + \|\mcB\|_1 |\by^0|_\infty}{1 - \|\mcB\|_1(|\by^0| + |\by|_\infty)} \\
		&\qquad\times \left[\frac{\|\mcB\|_1 |\by^0|_\infty}{(1-\|\mcB\|_1 |\by^0|_\infty)^2} \|\mcB\|_1 |\by^0|_\infty + \frac{\|\mcB\|_1|\by^0|_\infty}{1 - \|\mcB\|_1 |\by^0|_\infty}\right]
	\end{align*}
	It is possible to show that 
	\begin{equation*}
		\left|\frac{\partial}{\partial r_i} \det (I + J^{-1}(\by^0) \mcB\by)\right| \leq  \left(\frac{1 - \|\mcB\|_1 |\by^0|}{1 - \|\mcB\|_1(|\by^0|_\infty + |\by|_\infty)}\right)^3 - 1 
	\end{equation*}
	using the assumptions of the norms of \(\by\), \(\by^0\), and \(\mcB\). Thus
	\begin{align*}
		\|\det (I + J^{-1}(\by^0) \mcB\by) \|_\mcL &\leq \|\det (I + J^{-1}(\by^0) \mcB\by)\|_{C^1} \\
		&\leq 4 \left[ \left(\frac{1 - \|\mcB\|_1 |\by^0|}{1 - \|\mcB\|_1(|\by^0|_\infty + |\by|_\infty)}\right)^3 - 1  \right].
	\end{align*}
\end{proof}

\bibliographystyle{plain}
\bibliography{ref_nPBE,citations,nPBEapplicationsALL}

\begin{thebibliography}{10}

\bibitem{adams2003sobolev}
Robert~A Adams and John~JF Fournier.
\newblock {\em Sobolev spaces}.
\newblock Elsevier, 2003.

\bibitem{Allen2004}
M.~Allen.
\newblock Introduction to molecular dynamics simulation, lecture notes.
\newblock In N~Attig, K~Binder, H~Grubmuller, and K~Kremer, editors, {\em
  Computational Soft Matter: From Synthetic Polymers to Proteins, NIC Series},
  volume~23, pages 1--28. John von Neumann Institute for Computing, Julich,
  2004.

\bibitem{Ashcroft1976}
N~W Ashcroft and N~D Mermin.
\newblock {\em Solid State Physics}.
\newblock Holt Rinehart and Winston, 1976.

\bibitem{Atkins1986}
P~W Atkins.
\newblock {\em Physical Chemistry}.
\newblock Freeman, 1986.

\bibitem{Back2011}
J.~B\"{a}ck, F.~Nobile, L.~Tamellini, and R.~Tempone.
\newblock Stochastic spectral {G}alerkin and collocation methods for {PDE}s
  with random coefficients: A numerical comparison.
\newblock In Jan~S. Hesthaven and Einar~M. Rønquist, editors, {\em Spectral
  and High Order Methods for Partial Differential Equations}, volume~76 of {\em
  Lecture Notes in Computational Science and Engineering}, pages 43--62.
  Springer Berlin Heidelberg, 2011.

\bibitem{Baker2001}
NA~Baker, D~Sept, S~Joseph, MJ~Holst, and JA~McCammon.
\newblock Electrostatics of nanosystems: application to microtubules and the
  ribosome.
\newblock {\em Proceedings of the National Academy of Sciences of the United
  States of America}, 98(18):10037—10041, August 2001.

\bibitem{Barthel1998}
J~M~G Barthel, H~Krienke, and W~Kunz.
\newblock {\em Physical Chemistry of Electrolyte Solutions}.
\newblock Springer, 1998.

\bibitem{Novak_Ritter_00}
V.~Barthelmann, E.~Novak, and K.~Ritter.
\newblock High dimensional polynomial interpolation on sparse grids.
\newblock {\em Advances in Computational Mathematics}, 12:273--288, 2000.

\bibitem{Bayliss1959}
L~E Bayliss.
\newblock {\em Principles of General Physiology}, volume vol 1.
\newblock Longmans, 1959.

\bibitem{Bergethon1998}
P~R Bergethon.
\newblock {\em The Physical Basis of Biochemistry}.
\newblock Springer, 1998.
\newblock DH for electrolyte solutions and p 419 PB for double layer.

\bibitem{Berman2000}
H.~M. Berman, J.~Westbrook, Z.~Feng, G.~Gilliland, T.N. Bhat, H.~Weissig, I.N.
  Shindyalov, and P.E. Bourne.
\newblock The protein data bank (www.pdb.org).
\newblock {\em Nucleic Acids Res.}, 28:235--242, 2000.

\bibitem{Berry2000}
R~S Berry, S~A Rice, and J~Ross.
\newblock {\em Physical Chemistry}.
\newblock Oxford University Press, 2000.
\newblock PB and DH for electrolyte solutions and p 790 PB for double layer.

\bibitem{Blum1992}
L~Blum and D~Henderson.
\newblock {\em Fundamentals of Inhomogeneous Fluids}.
\newblock Marcel Dekker, 1992.

\bibitem{Bockris1970}
J~O’~M Bockris and A~K~N Reddy.
\newblock {\em Modern Electrochemistry}, volume vol 1 and 2.
\newblock Plenum, 1970.
\newblock PB and DH for electrolyte solutions and p 722 PB for double layer.

\bibitem{Brout1963}
R~Brout and P~Carruthers.
\newblock {\em Lectures on the Many-Electron Problem}.
\newblock Interscience, 1963.
\newblock Classical PB and DH for electron gas and p 99 quantum TF.

\bibitem{Butt2013}
H-J Butt, K~Graf, and M~Kappl.
\newblock {\em Physics and Chemistry of Interfaces}.
\newblock Wiley-VCH, 2013.

\bibitem{Cai2013}
W~Cai.
\newblock {\em Computational Methods for Electromagnetic Phenomena}.
\newblock Cambridge University Press, 2013.
\newblock ch 2 PB and DH theory and ch 3 numerical methods for solving the PB
  equation.

\bibitem{castrillon2016}
J.~E. Castrill\'{o}n-Cand\'{a}s, F.~Nobile, and R.~Tempone.
\newblock Analytic regularity and collocation approximation for {PDEs} with
  random domain deformations.
\newblock {\em Computers and Mathematics with applications}, 71(6):1173--1197,
  2016.

\bibitem{castrillon2016analytic}
Julio~E Castrillon-Candas, Fabio Nobile, and Raul~F Tempone.
\newblock Analytic regularity and collocation approximation for elliptic {PDE}s
  with random domain deformations.
\newblock {\em Computers \& Mathematics with Applications}, 71(6):1173--1197,
  2016.

\bibitem{castrillon2021hybrid}
Julio~E Castrill{\'o}n-Cand{\'a}s, Fabio Nobile, and Ra{\'u}l~F Tempone.
\newblock A hybrid collocation-perturbation approach for {PDE}s with random
  domains.
\newblock {\em Advances in Computational Mathematics}, 47(3):1--35, 2021.

\bibitem{castrillon2021stochastic}
Julio~E Castrill{\'o}n-Cand{\'a}s and Jie Xu.
\newblock A stochastic collocation approach for parabolic {PDE}s with random
  domain deformations.
\newblock {\em Computers \& Mathematics with Applications}, 93:32--49, 2021.

\bibitem{Ceriotti2009}
M.~Ceriotti, G.~Bussi, and M.~Parrinello.
\newblock Langevin equation with colored noise for constant-temperature
  molecular dynamics simulations.
\newblock {\em Physical Review Letters}, 102(2):020601, 2009.

\bibitem{Chaikin1995}
P~M Chaikin and T~C Lubensky.
\newblock {\em Principles of Condensed Matter Physics}.
\newblock Cambridge University Press, 1995.

\bibitem{chang2003analytic}
Hung-Chieh Chang, Wei He, and Nagabhushana Prabhu.
\newblock The analytic domain in the implicit function theorem.
\newblock {\em JIPAM. J. Inequal. Pure Appl. Math}, 4(1), 2003.

\bibitem{Chavazaviel1999}
J-N Chavazaviel.
\newblock {\em Coulomb Screening by Mobile Charges. Applications to Materials
  Science, Chemistry and Biology}.
\newblock Birkhaűser, 1999.
\newblock Classical PB and p 25 quantum TF for electrons in metal.

\bibitem{choi2021existence}
Brian Choi, Jie Xu, Trevor Norton, Mark Kon, and Julio~E. Castrillon-Candas.
\newblock Existence of strong solution for the complexified non-linear poisson
  boltzmann equation, 2021.

\bibitem{Dean2014}
D~Dean, J~Dobnikar, A~Naji, and R~Podgornik, editors.
\newblock {\em Electrostatics of Soft and Disordered Matter}.
\newblock Pan Stanford Publishing, 2014.

\bibitem{Doi2016}
M~Doi.
\newblock {\em Soft Matter Physics}.
\newblock Oxford University Press, 2016.

\bibitem{Edsall1958}
J~T Edsall and J~Wyman.
\newblock {\em Biophysical Chemistry}.
\newblock Academic, 1958.

\bibitem{Evans1999}
D~F Evans and H~Wennerstrom.
\newblock {\em The Colloidal Domain, where Physics, Chemistry and Biology
  Meet}.
\newblock Wiley-VCH, 1999.

\bibitem{Fawcett2004}
W~R Fawcett.
\newblock {\em Liquids, Solutions and Interfaces}.
\newblock Oxford University Press, 2004.
\newblock DH for electrolytes, p 542 PB for planar double layer.

\bibitem{Frenkel1946}
J~Frenkel.
\newblock {\em Kinetic Theory of Liquids}.
\newblock Oxford University Press, 1946.
\newblock The counter-ions only case was earlier discussed by.

\bibitem{Friedman1962}
H~C Friedman.
\newblock {\em Ionic Solution Theory}.
\newblock Interscience, 1962.

\bibitem{Giuliani2003}
G~F Giuliani and G~Vignale.
\newblock {\em Quantum Theory of the Electron Liquid}.
\newblock Cambridge University Press, 2003.

\bibitem{Glasstone1947}
S~Glasstone.
\newblock {\em Thermodynamics for Chemists}.
\newblock Van Nostrand, 1947.
\newblock (reprinted by Krieger 1972).

\bibitem{Gray2018}
C~G Gray and P~J Stiles.
\newblock Nonlinear electrostatics: the poisson{\textendash}boltzmann equation.
\newblock {\em European Journal of Physics}, 39(5):053002, jul 2018.

\bibitem{Hanggi1995}
P.~H\"anggi and P.~Jung.
\newblock {\em Colored Noise in Dynamical Systems}, pages 239--326.
\newblock John Wiley \& Sons, Inc., 2007.

\bibitem{Hansen2013}
J-P Hansen and I~R McDonald.
\newblock {\em Theory of Simple Liquids, with Applications to Soft Matter}.
\newblock Elsevier, 2013.
\newblock DH for electrolyte solutions, p 413 DH for plasmas and other
  classical ionic fluids, p 435 PB for double layer, p 448 TF for electron
  fluids.

\bibitem{Heitzinger2018}
Clemens Heitzinger, Michael Leumüller, Gudmund Pammer, and Stefan Rigger.
\newblock Existence, uniqueness, and a comparison of nonintrusive methods for
  the stochastic nonlinear poisson--boltzmann equation.
\newblock {\em SIAM/ASA Journal on Uncertainty Quantification},
  6(3):1019--1042, 2018.

\bibitem{Hobbie1988}
R~K Hobbie.
\newblock {\em Intermediate Physics for Medicine and Biology}.
\newblock Wiley, 1988.

\bibitem{Hober1947}
R~Hober.
\newblock {\em Physical Chemistry of Cells and Tissues}.
\newblock J and A Churchill, 1947.

\bibitem{Holm2001}
C~Holm, P~Kekcheff, and R~Podgornik, editors.
\newblock {\em Electrostatic Effects in Soft Matter and Biophysics}.
\newblock Kluwer, 2001.

\bibitem{holst1994poisson}
Michael~J Holst.
\newblock The {P}oisson-{B}oltzmann equation: Analysis and multilevel numerical
  solution.
\newblock {\em Applied Mathematics and CRPC, California Institute of
  Technology}, 1994.

\bibitem{Hunter2001}
R~J Hunter.
\newblock {\em Foundations of Colloid Science}.
\newblock Oxford University Press, 2001.

\bibitem{Ichimaru1984}
S~Ichimaru.
\newblock {\em Statistical Plasma Physics, Vol. II, Condensed Plasmas}.
\newblock Addison-Wesley, 1984.
\newblock Quantum plasmas, p 18 classical plasmas, p 116 electron liquids.

\bibitem{Israelachvili1991}
J~Israelachvili.
\newblock {\em Intermolecular and Surface Forces, with Applications to Colloid
  and Biological Systems}.
\newblock Academic, 1991.

\bibitem{Jung1987}
P.~Jung and P.~H\"anggi.
\newblock Dynamical systems: A unified colored-noise approximation.
\newblock {\em Phys. Rev. A}, 35:4464--4466, May 1987.

\bibitem{jurrus2018improvements}
Elizabeth Jurrus, Dave Engel, Keith Star, Kyle Monson, Juan Brandi, Lisa~E
  Felberg, David~H Brookes, Leighton Wilson, Jiahui Chen, Karina Liles, et~al.
\newblock Improvements to the apbs biomolecular solvation software suite.
\newblock {\em Protein Science}, 27(1):112--128, 2018.

\bibitem{Kirkwood1961}
J~G Kirkwood and I~Oppenheim.
\newblock {\em Chemical Thermodynamics}.
\newblock McGraw-Hill, 1961.

\bibitem{Kittel1996}
C~Kittel.
\newblock {\em Introduction to Solid State Physics}.
\newblock Wiley, 1996.
\newblock TF screening of a positive charge placed in a sea of electrons.

\bibitem{Landau1958}
L~D Landau and E~M Lifshitz.
\newblock {\em Statistical Physics}.
\newblock Addison-Wesley, 1958.
\newblock Classical PB and DH and p 232 quantum TF.

\bibitem{Lewis1961}
G~N Lewis and M~Randall.
\newblock {\em Thermodynamics}.
\newblock McGraw-Hill, 1961.

\bibitem{Li2009}
B~Li.
\newblock Minimization of the electrostatic free energy and the
  poisson–boltzmann equation for molecular solvation with implicit solvent.
\newblock {\em SIAM J. Math. Anal.}, 40:2536--2566, 2009.

\bibitem{Lyklema1991}
J~Lyklema.
\newblock {\em Fundamentals of Interface and Colloid Science}, volume vol 1 and
  2.
\newblock Academic, 1991.
\newblock 1995.

\bibitem{March1984}
N~H March and M~P Tosi.
\newblock {\em Coulomb Liquids}.
\newblock Academic, 1984.
\newblock Classical DH for bulk ionic fluids, p 42 quantum TF for electron
  fluids, p 262 classical PB for double layer.

\bibitem{McKelvey1966}
J~P McKelvey.
\newblock {\em Solid-State and Semiconductor Physics}.
\newblock Harper and Row, 1966.
\newblock For applications of double layers to (a) semiconductors, see
  Chavazaviel [67].

\bibitem{mclean2000strongly}
William McLean and William Charles~Hector McLean.
\newblock {\em Strongly elliptic systems and boundary integral equations}.
\newblock Cambridge university press, 2000.

\bibitem{McQuarrie1976}
D~A McQuarrie.
\newblock {\em Statistical Mechanics}.
\newblock Harper and Row, 1976.

\bibitem{Muthukumar2011}
M~Muthukumar.
\newblock {\em Polymer Translocation}.
\newblock CRC Press, 2011.

\bibitem{Nattino2019}
F.~Nattino, M.~Truscott, N.~Marzari, and O.~Andreussi.
\newblock Continuum models of the electrochemical diffuse layer in
  electronic-structure calculations.
\newblock {\em J. Chem. Phys.}, 150:041722, 2019.

\bibitem{Neumaier1997}
A.~Neumaier.
\newblock Molecular modeling of proteins and mathematical prediction of protein
  structure.
\newblock {\em SIAM Rev.}, 39(3):407–460, September 1997.

\bibitem{nobile2008b}
F.~Nobile, R.~Tempone, and C.~Webster.
\newblock An anisotropic sparse grid stochastic collocation method for partial
  differential equations with random input data.
\newblock {\em SIAM Journal on Numerical Analysis}, 46(5):2411--2442, 2008.

\bibitem{nobile2008a}
F.~Nobile, R.~Tempone, and C.~Webster.
\newblock A sparse grid stochastic collocation method for partial differential
  equations with random input data.
\newblock {\em SIAM Journal on Numerical Analysis}, 46(5):2309--2345, 2008.

\bibitem{Ohshima2010}
H~Ohshima.
\newblock {\em Biophysical Chemistry of Biointerfaces}.
\newblock Wiley, 2010.
\newblock PB and DH for planar, spherical and cylindrical double layers.

\bibitem{Opschoor2021}
J.~A.~A. Opschoor, Ch. Schwab, and J.~Zech.
\newblock Exponential relu dnn expression of holomorphic maps in high
  dimension.
\newblock {\em Constructive Approximation}, Apr 2021.

\bibitem{Poon2006}
W~K~C Poon and D~Andelman, editors.
\newblock {\em Soft Condensed Matter Physics in Molecular and Cell Biology}.
\newblock Taylor and Francis, 2006.

\bibitem{Rubinstein1990}
I~Rubinstein.
\newblock {\em Electro-Diffusion of Ions}.
\newblock Society for Industrial and Applied Mathematics, 1990.

\bibitem{scarabosio2022deep}
Laura Scarabosio.
\newblock Deep neural network surrogates for nonsmooth quantities of interest
  in shape uncertainty quantification.
\newblock {\em SIAM/ASA Journal on Uncertainty Quantification},
  10(3):975--1011, 2022.

\bibitem{Schillings2013}
Claudia Schillings and Christoph Schwab.
\newblock {Sparse, adaptive Smolyak quadratures for Bayesian inverse problems}.
\newblock {\em Inverse Problems}, 29(6):065011, 2013.

\bibitem{Schmickler2010}
W~Schmickler.
\newblock {\em Interfacial Electrochemistry}.
\newblock Oxford University Press, 2010.

\bibitem{Smolyak63}
S.~Smolyak.
\newblock {Quadrature and interpolation formulas for tensor products of certain
  classes of functions}.
\newblock {\em Soviet Mathematics, Doklady}, 4:240--243, 1963.

\bibitem{Sneppen2005}
K~Sneppen and G~Zocchi.
\newblock {\em Physics in Molecular Biology}.
\newblock Cambridge University Press, 2005.

\bibitem{Sparnaay1972}
M~J Sparnaay.
\newblock {\em The Double Layer}.
\newblock Pergamon, 1972.

\bibitem{Stein2019}
Christopher~J. Stein, John~M. Herbert, and Martin Head-Gordon.
\newblock The poisson--boltzmann model for implicit solvation of electrolyte
  solutions: Quantum chemical implementation and assessment via sechenov
  coefficients.
\newblock {\em The Journal of Chemical Physics}, 151(22):224111, Dec 2019.

\bibitem{Sundararaman2018}
R.~Sundararaman, K.~Letchworth-Weaver, and K.~A. Schwarz.
\newblock Improving accuracy of electrochemical capacitance and solvation
  energetics in first-principles calculations.
\newblock {\em J. Chem. Phys.}, 148:144105, 2018.

\bibitem{Sundararaman2017}
R.~Sundararaman and K.~Schwarz.
\newblock Evaluating continuum solvation models for the electrode-electrolyte
  interface: Challenges and strategies for improvement.
\newblock {\em J. Chem. Phys.}, 146:084111, 2017.

\bibitem{Morrison1967}
N~G van Kampen and B~U Felderhof.
\newblock {\em Theoretical Methods in Plasma Physics}.
\newblock North-Holland, 1967.

\bibitem{Verwey1948}
E~J~W Verwey and J~T~G Overbeek.
\newblock {\em Theory of the Stability of Lyophobic Colloids}.
\newblock Elsevier, 1948.
\newblock (reprinted by Dover 1999) This is the classic monograph on double
  layers in electrolyte solutions. PB theory starts on p 22.

\bibitem{whittlesey1965analytic}
Emmet~F Whittlesey.
\newblock Analytic functions in banach spaces.
\newblock {\em Proceedings of the American Mathematical Society},
  16(5):1077--1083, 1965.

\bibitem{Xia2013}
K.~Xia and G.~Wei.
\newblock Stochastic model for protein flexibility analysis.
\newblock {\em Phys. Rev. E}, 88:062709, Dec 2013.

\end{thebibliography}
		
\end{document}